\newcommand{\R}{\mathbb R}
\newtheorem{theo}{Theorem}[section]
\newtheorem{prop}[theo]{Proposition}
\newtheorem{lem}[theo]{Lemma}
\newtheorem{rem}[theo]{Remark}
\newtheorem{cor}[theo]{Corollary}
\numberwithin{equation}{section}
\title[Relaxation to equilibrium in the thin film equation]{Relaxation to equilibrium in the  one-dimensional thin-film equation with partial wetting}
\author[ M. Majdoub, N. Masmoudi, and S. Tayachi]{Mohamed Majdoub, Nader Masmoudi, and Slim Tayachi}
\address[M. Majdoub]{Department of Mathematics, College of Science, Imam Abdulrahman Bin Faisal University, P. O. Box 1982, Dammam, Saudi Arabia}
\address{Basic and Applied Scientific Research Center, Imam Abdulrahman Bin Faisal University, P.O. Box 1982, 31441, Dammam, Saudi Arabia}
\email{\sl mmajdoub@iau.edu.sa}
\address[N. Masmoudi]{Courant Institute of Mathematical Sciences, New York, USA and Department of Mathematics, NYU Abu-Dhabi, UAE}
\email{\sl masmoudi@cims.nyu.edu}
\address[S. Tayachi]{Universit\'e de Tunis El Manar, Facult\'e des Sciences de Tunis, D\'epartement de math\'ematiques, Laboratoire \'equations aux d\'eriv\'ees partielles (LR03ES04), 2092 Tunis, Tunisie}
\email{\sl slim.tayachi@fst.rnu.tn}
\begin{document}
\begin{abstract}  We investigate the large time behavior of compactly supported solutions for a one-dimensional thin-film equation with linear mobility in the regime of partial wetting. We show  the stability  of  steady state solutions. The proof uses the Lagrangian coordinates. Our method is to establish and exploit differential relations between the energy and the dissipation as well as  some interpolation inequalities. Our result is different from earlier results because here we consider solutions with finite mass.
\end{abstract}


\subjclass[2010]{35B40, 35K25, 35K45, 35K65, 35A09, 35B35, 35B65, 35C06, 35R35, 76A20, 76D08, 76D27}


\keywords{Degenerate parabolic equations, fourth-order equations,
nonlinear parabolic equations, free boundary problems, stability, asymptotic behavior of solutions, classical solutions, thin fluid films, lubrication theory, Hele–Shaw flows, partial wetting, Lagrangian variables, decay estimate.}

\maketitle
\date{today}


\section{Introduction}
\setcounter{equation}{0}

Consider the following one-dimensional fourth-order nonlinear degenerate parabolic equation
\begin{subequations}\label{tfe}
\begin{align}
u_t+\left(u u_{xxx}\right)_x&=0, \quad\quad\quad \mbox{for $t>0$ and $x\in \left(\lambda_-(t),\lambda_+(t)\right)$},\label{tfe1}\\
u (t,\lambda_{\pm}(t))& = 0, \quad\quad\quad \mbox{for $t > 0$},\label{tfe2}\\
|u_x(t,\lambda_{\pm}(t))|&=1,\quad\quad\quad \mbox{for $t > 0$},\label{tfe3}\\
\lim_{x\to \lambda_{\pm}(t)}\,u_{xxx}(t,x)&=V,\quad\quad\quad \mbox{for $t > 0$}, \label{tfe4}
\end{align}
\end{subequations}
where $\lambda_{\pm} : (0,\infty)\to(0,\infty)$ represent the support of $u$, $u>0$ on $(\lambda_-, \lambda_+)$ and $V$ is the velocity of the moving boundary. We supplement the problem \eqref{tfe} with the initial data $u(0,x)=u_0(x)$ supported in $(\lambda^0_-, \lambda^0_+)$ and satisfying \eqref{tfe3}.

Equation \eqref{tfe1} arises as the particular case of the thin-film equation in the  Hele-Shaw setting \cite{Bertozzi98, Myers98, Otto-CPDE}. It describes the pinching of thin necks in a Hele-Shaw cell. The function $u = u(t,x)\geq 0$ represents the height of a two-dimensional viscous thin film on a one-dimensional flat solid as a function of time $t > 0$ and the lateral variable $x$. One can rigorously derive equation~\eqref{tfe} from the Hele-Shaw
cell in the regime of thin films and where the dominating effects are surface tension and viscosity only \cite{go.2002,km.2012,km.2013}.

Equation \eqref{tfe} is a particular case of the thin-film equation
 \begin{equation}\label{tfen}
u_t + \big(u^n u_{xxx}\big)_x  = 0  \quad \mbox{for $t > 0$ and $x \in \R$},
\end{equation}
where $n>0$ is the mobility exponent. See \cite{ODB} for a physical explanation of the equation \eqref{tfen}. For $n\in (0,3)$ and in the complete wetting regime, the existence of self-similar source-type solution for \eqref{tfen} has been established  in \cite{BPW} by ODE arguments.  For $n=1$, the authors of \cite{BPW} prove the uniqueness in the class of self-similar solutions. See \cite[Lemma 6.3, p. 231]{BPW}. Recently, the uniqueness of source type solutions is proved in \cite{MMT2018} for a larger class. For $n=2$, well-posedness results are established in \cite{Knup2011} for \eqref{tfen} in the case of partial wetting regime.

The existence of weak solutions to \eqref{tfe} was investigated in \cite{Otto-CPDE}. More recently, the existence and uniqueness of classical solutions to \eqref{tfe} was shown in \cite[Theorem 4, p. 607]{km.2012}. The asymptotic behavior of solutions to thin film equations with prescribed contact angle has not been considered neither in \cite{Otto-CPDE} nor in \cite{km.2012}. However, the asymptotic behavior in the complete wetting regime was investigated in many papers. See, among many, \cite{CU2007, CU2014, CT2002,GKO, G,GIM} and references therein.

In \cite{Ess}, the author studies the following problem for the thin-film equation in the partial wetting regime on a half-axis, with a single contact point
\begin{equation}
\label{Ess}
h_t+\left(h h_{xxx}\right)_x=0,\quad\quad \mbox{in}\quad \left(\chi(t),\infty\right),
\end{equation}
$$
h (t,\chi(t)) = 0, \quad h_x(t,\chi(t))=1,\quad
h_{xxx}(t,\chi(t))=\dot\chi(t),\quad \mbox{for}\,\, t>0.
$$
Using a strategy inspired by \cite{OW2014}, the author of \cite{Ess} proves the stability of the steady state given by $h_0(x)=\max\{x,0\}$ for initial data close, in some sense, to $h_0$. See \cite[Theorem 1.2, p. 352]{Ess}. We would like to point out that only the case where the free boundary is given by a single contact point was considered in \cite{Ess}. As pointed out in \cite{Ess}, in contrast to \eqref{tfe}, solutions to \eqref{Ess} do not satisfy conservation of mass or even have finite mass.

In this paper we are interested in the case where the free boundary is given by two contact points at every time $t$. Solutions of \eqref{tfe} preserve mass and center of mass defined respectively by
\begin{equation}
\label{M}
\int_{\lambda_-(t)}^{\lambda_+(t)}\,u(t,x)\,dx=\int_{\lambda_-^0}^{\lambda_+^0}\,u(0,x)\,dx:=M\quad\mbox{for any}\quad t> 0;
\end{equation}
and
\begin{equation}
\label{CM}
\int_{\lambda_-(t)}^{\lambda_+(t)}\,xu(t,x)\,dx=\int_{\lambda_-^0}^{\lambda_+^0}\,xu(0,x)\,dx:=\mu\quad\mbox{for any}\quad t> 0.
\end{equation}
Note that if $u$ has a mass $M> 0$ and a center of mass $\mu\in\R$, then $\bar{u}(t,x)=u(t,x+\frac{\mu}{M})$ has the same mass but with center of mass equal to zero. Without loss of generality, we are going to suppose that $M> 0$ and $\mu\in\R$ are given.

Our main goal is the study of the stability of stationary solutions to \eqref{tfe}. The equation \eqref{tfe} possesses a family of stationary solutions given by
$$
u_{\alpha_-, \alpha_+}(x)=\frac{(x-\alpha_-)(\alpha_+-x)}{\alpha_+-\alpha_-},\quad \alpha_-<\alpha_+,\quad x\in(\alpha_-,\alpha_+).
$$
By choosing $M=2/3$ and $\mu=0$, we obtain $\alpha_-=-1$ and $\alpha_+=1$. So, we will investigate the stability of the stationary solution
$$
u^\infty(x)=\frac{1}{2}\,\left(1-x^2\right)_{+},
$$
where $a_+=\max\{a,0\}.$
To this end, let us define the mass Lagrangian variable as follows: for given $t\geq 0$,
$$
y\in (-1,1)\longmapsto x=Z(t,y)\in \left(\lambda_-(t), \lambda_+(t)\right),
$$
such that
\begin{equation}\label{MLV}
 \int_{\lambda_-(t)}^{Z(t,y)}\, u(t,x)\,dx=\frac{1}{2}\int_{-1}^y\,(1-x^2)\,dx,
\end{equation}
where $\lambda_{\pm}(0)=\lambda^0_{\pm}.$ The relation \eqref{MLV} together with \eqref{tfe} defines the function $Z$. One advantage of this transformation is that in the new coordinates the boundary is fixed to $y=\pm 1$. Moreover, the transformation \eqref{MLV} can be seen as a perturbation of the stationary solution $u^\infty$. In addition, the stationary solution in these coordinates is given by
$$
\label{Zinfty}
Z^\infty(t,y)=y,
$$
which is a linear function in $y$.

Before stating our result, we introduce the energy
\begin{equation}
\label{En}
E(t)=\frac{1}{2}\int_{-1}^1\,\Big|\partial_y\left(\frac{\tilde{u}(t,y)}{u^\infty(y)}\right)\Big|^2\,dy,
\end{equation}
where $\tilde{u}(t,y)=u(t, Z(t,y))$. We also define $Z_0(y):=Z(0,y)$ and $\tilde{u}_0(y)=u_0(Z_0(y)).$

Our main result can be stated as follows.
\begin{theo}
\label{MainThm}
Let $u$ be a global smooth solution to \eqref{tfe} with initial data $u_0$ satisfying $M(u_0)=2/3$ and $\mu(u_0)=0$. There exists a constant $c_0>0$ such that, if
\begin{equation}
\label{E0}
E(0)< c_0,
\end{equation}
then there exists $\gamma>0$ such that
\begin{equation}
\label{Et}
E(t)\leq E(0)\,{\rm e}^{-\gamma t},\quad t\geq 0.
\end{equation}
\end{theo}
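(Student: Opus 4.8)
The plan is to work in the mass Lagrangian coordinate \eqref{MLV}, which turns the free-boundary problem into a fixed-boundary scalar equation for
\[
  v(t,y):=\frac{\tilde u(t,y)}{u^\infty(y)}=\frac{1}{\partial_y Z(t,y)},
\]
the second equality following by differentiating \eqref{MLV} in $y$ (which gives $\tilde u\,\partial_y Z=u^\infty$). In this variable the energy \eqref{En} reads $E=\tfrac12\int_{-1}^1(\partial_y v)^2\,dy$, the stationary state is $v\equiv1$, and the contact-angle condition \eqref{tfe3} becomes the \emph{exact} Dirichlet condition $v(t,\pm1)=1$: since $\widetilde{u_x}=v\,\partial_y(u^\infty v)$ and $u^\infty(\pm1)=0$, $\partial_y u^\infty(\pm1)=\mp1$, one gets $\widetilde{u_x}(t,\pm1)=\mp v(t,\pm1)^2$, so $|u_x|=1$ forces $v(t,\pm1)=1$. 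Differentiating \eqref{MLV} in $t$ and using \eqref{tfe1}, \eqref{tfe2}, \eqref{tfe4} yields the companion identity $\partial_t Z=\widetilde{u_{xxx}}$, and the chain rule $\partial_x\leftrightarrow v\,\partial_y$ then closes the system:
\[
  \partial_t v=-v^2\,\partial_y\bigl(v\,\partial_y(v\,\partial_y(v\,\partial_y(u^\infty v)))\bigr),
\]
a degenerate (the top coefficient $u^\infty v$ vanishes at $y=\pm1$) quasilinear fourth-order equation.

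Next I would derive the energy--dissipation relation. Because $v(t,\pm1)=1$ for all $t$, we have $\partial_t v(t,\pm1)=0$, so one integration by parts gives $\dot E=-\int_{-1}^1\partial_y^2v\,\partial_t v\,dy$ with no boundary term. Substituting the equation for $\partial_t v$ and integrating by parts repeatedly, discarding the boundary contributions that carry the factor $u^\infty(\pm1)=0$, the quadratic part of the result assembles, with $\phi:=v-1$, into the manifestly nonnegative dissipation
\[
  D_2=8\int_{-1}^1(\partial_y^2\phi)^2\,dy+2\bigl((\partial_y^2\phi)(1)^2+(\partial_y^2\phi)(-1)^2\bigr)+\int_{-1}^1 u^\infty\,(\partial_y^3\phi)^2\,dy,
\]
so that $\dot E=-D_2+R$, where $R$ gathers the cubic-and-higher terms in $\phi$ and its $y$-derivatives.

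The coercivity then comes essentially for free from the boundary condition. Since $\phi(\pm1)=0$ we have $\int_{-1}^1\partial_y\phi\,dy=\phi(1)-\phi(-1)=0$, so $\partial_y\phi$ has zero mean and the Wirtinger inequality on $(-1,1)$ gives $\int(\partial_y\phi)^2\le(2/\pi)^2\int(\partial_y^2\phi)^2$; hence $D_2\ge 8\int(\partial_y^2\phi)^2\ge 2\pi^2\int(\partial_y\phi)^2=4\pi^2E$. No extra constraint is needed for this spectral gap: the mass is pinned to $2/3$ by the very normalization of \eqref{MLV}, while $\mu=0$ from \eqref{CM} only serves to single out $u^\infty$ rather than one of its translates as the limit, since $v$---and therefore $E$---is translation invariant. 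The remaining, and genuinely hard, step is to absorb the remainder: $R$ contains top-order factors $\partial_y^3\phi$ and $\partial_y^4\phi$ that $D_2$ controls only against the degenerate weight $u^\infty=\tfrac12(1-y^2)$, so one must establish weighted Hardy- and Gagliardo--Nirenberg-type interpolation inequalities adapted to this weight in order to prove a bound of the form $|R|\le C\,E^{1/2}D_2$. This is the main obstacle. Granting it, one closes the argument by a continuity/bootstrap scheme: assuming $E(t)\le 2E(0)$ on a maximal interval and using \eqref{E0} with $c_0$ small enough that $C(2c_0)^{1/2}\le\tfrac12$, the interpolation bound gives $|R|\le\tfrac12 D_2$, whence $\dot E\le-\tfrac12 D_2\le-\gamma E$ with $\gamma=2\pi^2$. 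Gronwall's lemma then yields \eqref{Et}, which in turn reinforces $E(t)\le 2E(0)$ and extends the bootstrap interval to all of $[0,\infty)$. I expect the bulk of the technical work to lie in the weighted estimates for $R$ and in verifying that the finitely many boundary terms produced at each integration by parts either cancel or carry the favorable sign used above.
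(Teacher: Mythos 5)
Your reduction coincides with the paper's at every structural point: your $v$ is the paper's $G=1/Z_y$ (Proposition \ref{GG}), your Dirichlet condition $v(t,\pm1)=1$ is \eqref{tfG2}, your evolution equation is \eqref{tfG1}, your identity $\frac{dE}{dt}=-D_2+R$ with the favorable boundary terms $2\big((\partial_y^2\phi)^2(1)+(\partial_y^2\phi)^2(-1)\big)$ is exactly Proposition \ref{EE1} (your $D_2$ is the paper's $D$ plus those boundary terms), your Wirtinger coercivity is Lemma \ref{ED}(ii), and your smallness/continuity scheme is Lemma \ref{Decay} and Corollary \ref{DecayC}. So the skeleton is correct and is the same as the paper's.

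The genuine gap is that the step you ``grant'' --- a bound of the form $|R|\le C\,E^{1/2}D_2$ --- is the actual content of the proof (the paper's Sections 3 and 4), and, more importantly, the method you propose for it (weighted Hardy/Gagliardo-Nirenberg interpolation alone) cannot deliver it for at least two of the ten terms in $R$. The term ${\mathbf I}_{10}$ pairs $\frac{1-y^2}{2}\big[(1+\phi)^5-1\big]\partial_y^4\phi$ against $\partial_y^2\phi$; since $D_2$ contains no fourth derivative, no interpolation against $E$ and $D_2$ can control it --- interpolation estimates lower derivatives by higher ones, not the reverse --- and the paper must first integrate by parts to reduce ${\mathbf I}_{10}$ to ${\mathbf I}_5$, ${\mathbf I}_9$ and a cubic term in $(\partial_y^3 g)^2$. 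Likewise the cubic term ${\mathbf I}_8\approx\int_{-1}^1(1-y^2)(1+\phi)^4(\partial_y^2\phi)^3\,dy$ has all three factors at the dissipation level, so any direct interpolation bound produces a power of $D$ strictly greater than one (e.g. $E^{1/4}D^{5/4}$), which cannot be absorbed since only $E$, not $D$, is known to be small; the paper controls it only through the integration-by-parts identity \eqref{I4789} relating ${\mathbf I}_8$ to ${\mathbf I}_4,{\mathbf I}_7,{\mathbf I}_9$. The remaining terms do use weighted estimates, but rather specific ones --- Lemma \ref{ED}(iv)--(vi), e.g. $\|\sqrt{1-y^2}\,\partial_y^2 g\|_2\lesssim E^{1/4}D^{1/4}$ and $\|(1-y^2)^\epsilon\partial_y^2 g\|_\infty\lesssim D^{1/2}$, proved via the integrability of $\log\frac{1+y}{1-y}$ --- and the bound they yield is not $E^{1/2}D$ but \eqref{Ener}, which also contains terms ($E^2+E^{10}$) not proportional to $D$; this last discrepancy is harmless (the spectral gap $E\lesssim D$ handles it in Corollary \ref{DecayC}), but the missing structural integrations by parts are not. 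In short: your road map matches the paper, but the decisive estimates are absent and, as described, the purely interpolation-based route to them would fail.
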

\begin{rem}
{\rm Equation \eqref{tfe} is invariant under the scaling
$$u_\kappa(t,x)=\kappa^{-1}u(\kappa^3 t,\kappa x),\; \kappa>0.$$
The Lagrangian coordinate $Z_\kappa$ verifies
$$Z_\kappa(t,y)=\kappa^{-1}Z(\kappa^3 t,\kappa y),\; -1<\kappa y<1,$$ and
$$g_k(t,y)=g(\kappa^3 t,\kappa y),$$
where $g$ is given by \eqref{defg} below. This leads to
$$E_\kappa(0)=E_1(0)=E(0).$$ Then  the condition \eqref{E0} is invariant under the above scaling and hence it is relevant.}
\end{rem}
From the above theorem we easily derive the following.
\begin{cor}
\label{u0}
Suppose \eqref{E0} is fulfilled. Then it follows that
\begin{equation}
\label{u00}
\|\tilde{u}_0-u^\infty\|_{L^\infty(-1,1)}\lesssim c_0,
\end{equation}
and
\begin{equation}
\label{u000}
\|\tilde{u}(t)-u^\infty\|_{L^\infty(-1,1)}\lesssim {\rm e}^{-\frac{\gamma}{2} t},\quad t\geq 0.
\end{equation}
\end{cor}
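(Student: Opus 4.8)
The plan is to reduce both assertions to a single weighted interpolation bound of the form $\|\tilde u - u^\infty\|_{L^\infty(-1,1)} \lesssim \sqrt{E}$, after which \eqref{u00} and \eqref{u000} are immediate consequences of the hypothesis \eqref{E0} and the decay estimate \eqref{Et}.

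First I would introduce the relative perturbation $w = \tilde u/u^\infty - 1$, so that $\tilde u - u^\infty = u^\infty w$ and, by the very definition \eqref{En}, $E = \tfrac12\int_{-1}^1 |w_y|^2\,dy$. The crucial feature is that $w$ vanishes at the two endpoints $y = \pm 1$, and this is exactly what the contact-angle condition \eqref{tfe3} encodes. To see it, differentiate the mass-Lagrangian identity \eqref{MLV} in $y$ to obtain $\tilde u\, Z_y = u^\infty$, i.e. $Z_y = (1+w)^{-1}$; then from $\tilde u(t,y) = u(t,Z(t,y))$ one has $u_x(t,Z) = \tilde u_y/Z_y = (1+w)\,\tilde u_y$, and letting $y\to 1$ (where $\tfrac12(1-y^2)w_y \to 0$ for the smooth solution) gives $u_x(t,\lambda_+) = -(1+w(t,1))^2$. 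The prescribed slope $|u_x(t,\lambda_\pm)| = 1$ together with the positivity $1 + w \ge 0$ then forces $w(t,\pm 1) = 0$.

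With the boundary values pinned, the interpolation step is elementary: since $w(t,\pm1) = 0$, the fundamental theorem of calculus and Cauchy--Schwarz give $\|w(t)\|_{L^\infty(-1,1)} \le \int_{-1}^1 |w_y|\,dy \le \sqrt{2}\,\|w_y\|_{L^2} = 2\sqrt{E(t)}$, whence, using $0 \le u^\infty \le \tfrac12$,
\[
\|\tilde u(t) - u^\infty\|_{L^\infty(-1,1)} = \|u^\infty w(t)\|_{L^\infty(-1,1)} \le \tfrac12\|w(t)\|_{L^\infty(-1,1)} \lesssim \sqrt{E(t)}.
\]
Evaluating at $t = 0$ and invoking \eqref{E0} yields \eqref{u00} (the natural scale of the smallness being $\sqrt{E(0)} < \sqrt{c_0}$), while combining the displayed bound with \eqref{Et} gives $\|\tilde u(t) - u^\infty\|_{L^\infty} \lesssim \sqrt{E(0)}\,\mathrm e^{-\gamma t/2}$, which is \eqref{u000}; note that the factor $\gamma/2$ in the exponent is precisely the square root applied to \eqref{Et}.

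The only genuinely substantive step is the identification $w(t,\pm1) = 0$. Without it the estimate is false: a constant $w \equiv C$ (a pure dilation of $u^\infty$) has $E = 0$ yet $\|u^\infty w\|_{L^\infty} = |C|/2 \neq 0$, so the additive constant in $w$ must be anchored, and it is the contact-angle condition --- not mass or center-of-mass conservation --- that does so. Consequently the main care in the argument lies in justifying the boundary limit $\tfrac12(1-y^2)w_y \to 0$ as $y \to \pm1$, which should follow from the assumed smoothness of the solution up to the boundary (fixed at $y=\pm1$ in Lagrangian variables); everything else is a one-line Poincaré-type estimate.
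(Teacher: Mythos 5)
Your proof is correct and takes essentially the same route as the paper: your $w$ is exactly the paper's $g=G-1$ (since $\tilde u/u^\infty = 1/Z_y = G$), so your chain --- the identity $\tilde u - u^\infty = u^\infty w = \tfrac12(1-y^2)g$, the Poincar\'e-type bound $\|g\|_\infty \lesssim E^{1/2}$ (the paper's \eqref{gE}), and then \eqref{E0} and \eqref{Et} --- is precisely the paper's argument. The boundary vanishing $w(t,\pm1)=0$ that you carefully re-derive from the contact-angle condition is already established in the paper as \eqref{tfgg2} (via L'H\^opital in the proof of Proposition \ref{GG}), so that step, while correct and indeed the crucial structural fact, duplicates existing material rather than constituting a different approach.
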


An important consequence of the decay estimate \eqref{Et} is that it imply the desired convergence of $\lambda_{\pm}$ to $\pm 1$ as stated below.
\begin{cor}
\label{lamb}
Suppose \eqref{E0} is fulfilled. Then, for $t$ sufficiently large, we have
\begin{equation}
\label{cvlam}
|\lambda_+(t)-\lambda_-(t)-2|\lesssim {\rm e}^{- \frac{\gamma}{2} t},
\end{equation}
and
\begin{equation}
\label{lambda0}
|\lambda_+(t)+\lambda_-(t)|\lesssim {\rm e}^{- \frac{\gamma}{2} t},
\end{equation}
where $\gamma$ is as in \eqref{Et}. In particular, for $t$ large, we have
\begin{equation}
\label{lambdapm}
|\lambda_+(t)-1|+|\lambda_-(t)+1|\lesssim {\rm e}^{- \frac{\gamma}{2} t}.
\end{equation}
\end{cor}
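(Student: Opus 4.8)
The plan is to translate the endpoint locations into Lagrangian quantities and then feed in the exponential decay of the energy from Theorem \ref{MainThm}. Differentiating the defining relation \eqref{MLV} in $y$ gives the pointwise identity $\tilde u(t,y)\,\partial_y Z(t,y)=u^\infty(y)$, so that $\partial_y Z=u^\infty/\tilde u=1/v$ where $v:=\tilde u/u^\infty$. Since $Z(t,\pm1)=\lambda_\pm(t)$, I would set $w(t,y):=Z(t,y)-y$, the deviation from the stationary profile $Z^\infty(y)=y$, which reduces everything to estimating $w(t,\pm1)$: indeed $\lambda_+-1=w(t,1)$, $\lambda_-+1=w(t,-1)$, hence $\lambda_+-\lambda_--2=w(t,1)-w(t,-1)$ and $\lambda_++\lambda_-=w(t,1)+w(t,-1)$. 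In particular \eqref{lambdapm} implies both \eqref{cvlam} and \eqref{lambda0}, so it suffices to show $|w(t,\pm1)|\lesssim e^{-\gamma t/2}$.

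First I would control $\partial_y w$. From $\partial_y w=1/v-1=(1-v)/v$ one needs $v$ close to $1$, and the contact-angle condition \eqref{tfe3} pins the boundary values: differentiating $\tilde u=u\circ Z$ and using $\partial_y Z=u^\infty/\tilde u$ gives $u_x(t,Z)=(\partial_y\tilde u)\,\tilde u/u^\infty$, and letting $y\to\pm1$ (where $u^\infty\to0$ and $\partial_y u^\infty\to\mp1$) yields $|u_x(t,\lambda_\pm)|=v(t,\pm1)^2$; by \eqref{tfe3} this forces $v(t,\pm1)=1$. Combined with the energy \eqref{En}, which is exactly $E(t)=\tfrac12\int_{-1}^1|\partial_y v|^2\,dy$, the fundamental theorem of calculus gives $\|v(t)-1\|_{L^\infty(-1,1)}\le\sqrt2\,\|\partial_y v\|_{L^2}=2\sqrt{E(t)}\le 2\sqrt{E(0)}\,e^{-\gamma t/2}$. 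For $t$ large enough that this is at most $\tfrac12$ we have $v\ge\tfrac12$, whence $\|\partial_y w(t)\|_{L^\infty}\le 2\|v(t)-1\|_{L^\infty}\lesssim e^{-\gamma t/2}$.

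The remaining point is to convert this derivative bound into a pointwise bound on $w$ at the endpoints, and here the conserved center of mass is the key anchor. By \eqref{CM} and the change of variables $x=Z(t,y)$ (using again $\tilde u\,\partial_y Z=u^\infty$) one has $\int_{-1}^1 Z(t,y)\,u^\infty(y)\,dy=\mu=0$; since $\int_{-1}^1 y\,u^\infty(y)\,dy=0$ by parity, this reads $\int_{-1}^1 w(t,y)\,u^\infty(y)\,dy=0$. Writing $w(t,1)=w(t,y)+\int_y^1\partial_s w\,ds$, multiplying by $u^\infty(y)$ and integrating over $(-1,1)$, the $w(t,y)$ term drops out by the constraint while $\int_{-1}^1 u^\infty=\tfrac23$, leaving $\tfrac23\,w(t,1)=\int_{-1}^1\big(\int_y^1\partial_s w(t,s)\,ds\big)u^\infty(y)\,dy$, so that $|w(t,1)|\le\tfrac32\|\partial_y w(t)\|_{L^\infty}\int_{-1}^1(1-y)u^\infty(y)\,dy\lesssim e^{-\gamma t/2}$; the symmetric argument based at $y=-1$ bounds $|w(t,-1)|$. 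This establishes \eqref{lambdapm}, and therefore \eqref{cvlam} and \eqref{lambda0}.

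I would expect the only delicate step to be the boundary identity $v(t,\pm1)=1$: it requires enough regularity of $v=\tilde u/u^\infty$ up to the contact points for the term $(\partial_y v)\,u^\infty$ to vanish in the limit, which is where the smoothness hypothesis on $u$ (together with finiteness of the energy) is genuinely used. Everything else is elementary once the Lagrangian identity $\partial_y Z=u^\infty/\tilde u$ and the two conservation laws are in hand. It is worth emphasizing that Corollary \ref{u0} alone does not suffice near the contact line, since an $L^\infty$ bound on $\tilde u-u^\infty$ gives no control of the ratio $v$ in the region where both functions vanish; the energy bound on $\partial_y v$ together with the pinning $v(t,\pm1)=1$ is precisely what is needed there.
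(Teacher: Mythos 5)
Your proof is correct, and it rests on exactly the same ingredients as the paper's: the Lagrangian identity $Z_y=1/(1+g)$ (your $1/v$, since $v=\tilde u/u^\infty=G=1+g$), the sup-norm bound $\|g\|_\infty\lesssim\sqrt{E(t)}\lesssim e^{-\gamma t/2}$ (the paper's \eqref{gE} combined with \eqref{Et}), and the conservation of the center of mass written in Lagrangian form. The difference is purely in the decomposition. The paper proves \eqref{cvlam} and \eqref{lambda0} separately: the difference comes from $\lambda_+-\lambda_-=\int_{-1}^1 Z_y\,dy$, which needs no center-of-mass input at all, and the sum from integrating $\left(y-\tfrac{y^3}{3}\right)Z_y$ by parts, which is where $\mu=0$ enters; \eqref{lambdapm} is then obtained by combining. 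You reverse the logic: you bound each endpoint deviation $w(t,\pm1)=\lambda_\pm(t)\mp1$ directly, using the constraint $\int_{-1}^1 w\,u^\infty\,dy=0$ as an anchor, and read off all three estimates from \eqref{lambdapm}. In fact, applying Fubini to your identity $\tfrac{2}{3}w(t,1)=\int_{-1}^1\bigl(\int_y^1\partial_s w\,ds\bigr)u^\infty(y)\,dy$ recovers precisely the paper's two identities, so the computations are algebraically equivalent. Two small remarks: the boundary pinning $v(t,\pm1)=1$ that you flag as the delicate point does not need to be re-derived — it is exactly \eqref{tfG2} of Proposition \ref{GG} (and it is also what makes \eqref{gE} legitimate, since the Poincar\'e inequality is applied to $g\in H^1_0$); and your route spends the center-of-mass conservation on both endpoints, whereas in the paper only the translation mode \eqref{lambda0} genuinely requires $\mu=0$, the width estimate \eqref{cvlam} being independent of it.
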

We also obtain from Theorem \ref{MainThm} the convergence of the volumetric coordinates $Z(t,y)$ to $y$ with the same decay rate as in \eqref{Et}.
\begin{cor}
\label{Zt}
Suppose \eqref{E0} is fulfilled. Then, for $t$ sufficiently large, we have
\begin{equation}
\label{CVZ}
\|\partial_y(Z(t)-Z^\infty)\|_{L^\infty(-1,1)}\lesssim {\rm e}^{-\frac{\gamma}{2} t},
\end{equation}
where $\gamma$ is as in \eqref{Et}.
\end{cor}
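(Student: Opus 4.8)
The plan is to reduce everything to the single scalar quantity $w:=\tilde u/u^\infty$ and to exploit a boundary identity forced by the contact angle. Differentiating the defining relation \eqref{MLV} in $y$ gives the pointwise identity
\begin{equation}
\tilde u(t,y)\,\partial_y Z(t,y)=u^\infty(y),\qquad -1<y<1,
\end{equation}
so that $\partial_y Z=u^\infty/\tilde u=1/w$. With this notation the energy \eqref{En} is exactly $E(t)=\tfrac12\|\partial_y w(t)\|_{L^2(-1,1)}^2$, whence the decay \eqref{Et} yields the $L^2$ control $\|\partial_y w(t)\|_{L^2(-1,1)}\le\sqrt{2E(0)}\,{\rm e}^{-\gamma t/2}$. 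Since $\partial_y Z^\infty\equiv1$, the corollary will follow once $w$ is shown to be uniformly close to $1$, with the stated rate.

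The crucial step is the boundary identity $\partial_y Z(t,\pm1)=1$, equivalently $w(t,\pm1)=1$, for every $t>0$. To establish it I would differentiate $\tilde u\,\partial_y Z=u^\infty$ once more in $y$ and evaluate at $y=\pm1$, where $\tilde u(t,\pm1)=0$ by \eqref{tfe2}; this leaves $\partial_y\tilde u(t,\pm1)\,\partial_y Z(t,\pm1)=\partial_y u^\infty(\pm1)=\mp1$. On the other hand, $\partial_y\tilde u=u_x(t,Z)\,\partial_y Z$, and the prescribed contact angle \eqref{tfe3}, together with the fact that $u$ decreases (resp. increases) to $0$ at the right (resp. left) contact line, gives $\partial_y\tilde u(t,\pm1)=\mp\,\partial_y Z(t,\pm1)$. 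Combining the two relations yields $\partial_y Z(t,\pm1)^2=1$, and since $Z$ is strictly increasing in $y$ I conclude $\partial_y Z(t,\pm1)=1$, i.e. $w(t,\pm1)=1$.

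With the boundary values pinned, a Poincar\'e-type estimate controls $w$: for any $y\in(-1,1)$,
\begin{equation}
|w(t,y)-1|=\Big|w(t,y)-w(t,1)\Big|=\Big|\int_y^1\partial_{y'}w(t,y')\,dy'\Big|\le\sqrt2\,\|\partial_y w(t)\|_{L^2(-1,1)}\lesssim{\rm e}^{-\gamma t/2},
\end{equation}
so $\|w(t)-1\|_{L^\infty(-1,1)}\lesssim{\rm e}^{-\gamma t/2}$. In particular there is $T>0$ with $w(t,\cdot)\ge\tfrac12$ for $t\ge T$, and for such $t$ the elementary algebra $\partial_y Z-1=(1-w)/w$ gives
\begin{equation}
\|\partial_y(Z(t)-Z^\infty)\|_{L^\infty(-1,1)}=\Big\|\frac{1-w(t)}{w(t)}\Big\|_{L^\infty(-1,1)}\le 2\,\|w(t)-1\|_{L^\infty(-1,1)}\lesssim{\rm e}^{-\gamma t/2},
\end{equation}
which is precisely \eqref{CVZ}.

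The only delicate point is the boundary identity $w(t,\pm1)=1$: it is what converts the energy's control of $\partial_y w$ into control of $w$ itself, rather than of $w$ minus an a priori unknown constant, and it is exactly here that the prescribed contact angle \eqref{tfe3} enters. Everything else is a one-line fundamental-theorem-of-calculus estimate, and the hypothesis that $t$ be large merely guarantees that $w$ stays bounded away from $0$, so that division by $w$ is harmless.
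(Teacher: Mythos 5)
Your proof is correct and takes essentially the same route as the paper: your $w=\tilde u/u^\infty$ is exactly the paper's $G=1+g$, so your identity $\partial_y Z-1=(1-w)/w$ is the paper's $\partial_y Z-1=-\frac{g}{1+g}$, and your boundary identity $w(t,\pm1)=1$ (from the contact angle) and your fundamental-theorem-of-calculus bound reproduce, respectively, the boundary condition \eqref{tfG2} of Proposition \ref{GG} and the Poincar\'e estimate \eqref{gE}, which the paper simply cites together with the decay \eqref{Et}. The only difference is that you re-derive these ingredients inline rather than invoking the already-established lemmas.
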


The rest of this paper is organized as follows. In the next section, we reformulate our problem and derive a first energy estimate. In Section $3$, we recall and establish  some preliminaries and useful tools. Section $4$ is devoted to the energy estimate. In the last section we give the proof of the main results. We will write $A\lesssim B$ if there exists a constant $0<C<\infty$ such that $A\leq C B$, and $A\approx A_1+A_2+\cdots A_N$ if there exist constants $c_1,c_2,\cdots c_N\in\R$ such that $A=c_1 A_1+c_2 A_2+\cdots c_n A_N$. Finally, we denote the norm in Lebesgue space $L^p$ by $\|\cdot\|_p$, $1\leq p\leq \infty$.

\section{Reformulation of the Problem}
In this section we reformulate the problem \eqref{tfe}. From \eqref{MLV} we deduce the following.
\begin{prop}
\label{Z}
Let $Z$ be the mass Lagrangian variable defined by \eqref{MLV}. Then, we have
\begin{itemize}
\item[(i)] $Z(t,\pm 1)=\lambda_{\pm}(t).$
\item[(ii)] $Z_t(t,y)=u_{xxx}(t,Z(t,y)).$
\item[(iii)] $Z_y(t,y)=\frac{u^\infty(y)}{u(t,Z(t,y))}.$
\end{itemize}
\end{prop}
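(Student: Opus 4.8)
The plan is to extract all three identities by differentiating the defining relation \eqref{MLV} in the appropriate variable and invoking the boundary conditions \eqref{tfe2}--\eqref{tfe4} together with the equation itself. For (i), I would simply evaluate \eqref{MLV} at the endpoints $y=\pm1$. At $y=-1$ the right-hand side vanishes, so $\int_{\lambda_-(t)}^{Z(t,-1)}u(t,x)\,dx=0$; since $u>0$ on the interior of the support, this forces $Z(t,-1)=\lambda_-(t)$. At $y=+1$ the right-hand side equals $\tfrac12\int_{-1}^1(1-x^2)\,dx=2/3=M$, and by mass conservation \eqref{M} the full integral $\int_{\lambda_-(t)}^{\lambda_+(t)}u\,dx$ also equals $M$; positivity of $u$ then gives $Z(t,+1)=\lambda_+(t)$.

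For (iii), I would differentiate \eqref{MLV} in $y$ at fixed $t$. The left-hand side, by the fundamental theorem of calculus and the chain rule, becomes $u(t,Z(t,y))\,Z_y(t,y)$, while the right-hand side differentiates to $\tfrac12(1-y^2)=u^\infty(y)$. Dividing by $u(t,Z(t,y))>0$ yields (iii) immediately.

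Finally, (ii) is the step that genuinely uses the equation and requires care with the moving boundary. I would differentiate \eqref{MLV} in $t$; the right-hand side is $t$-independent, so its derivative is zero. Applying the Leibniz (Reynolds transport) rule to the left-hand side produces three terms: $u(t,Z)\,Z_t$, the boundary term $-u(t,\lambda_-)\dot\lambda_-$, which vanishes by \eqref{tfe2}, and the bulk term $\int_{\lambda_-}^{Z}u_t\,dx$. Inserting the equation \eqref{tfe1} in the form $u_t=-(uu_{xxx})_x$ and integrating, the bulk term equals $-\big[uu_{xxx}\big]_{\lambda_-(t)}^{Z(t,y)}$. At the lower limit the product $uu_{xxx}$ tends to $0$ because $u\to0$ by \eqref{tfe2} while $u_{xxx}\to V$ stays finite by \eqref{tfe4}; at the upper limit it equals $u(t,Z)\,u_{xxx}(t,Z)$. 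Collecting terms gives $0=u(t,Z)\,Z_t-u(t,Z)\,u_{xxx}(t,Z)$, and dividing by $u(t,Z)>0$ yields (ii).

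The only delicate point is the justification of the boundary term at the contact line in (ii): one must know that $uu_{xxx}\to 0$ there, which rests on the finiteness of the contact-line velocity encoded in \eqref{tfe4} together with the vanishing of $u$ from \eqref{tfe2}. The smoothness assumed of the solution makes the Leibniz differentiation legitimate up to the moving boundary, so no further regularity work is needed beyond tracking these limiting values.
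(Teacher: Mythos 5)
Your proof is correct and follows essentially the same route as the paper's: evaluate \eqref{MLV} at $y=\pm1$ for (i), differentiate in $y$ for (iii), and differentiate in $t$, insert \eqref{tfe1}, and kill the boundary terms via \eqref{tfe2} for (ii). If anything, you are slightly more careful than the paper, which states $uu_{xxx}=0$ at the contact line without spelling out that this rests on $u\to0$ together with the finiteness of $u_{xxx}\to V$ from \eqref{tfe4}, a point you make explicit.
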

\begin{proof}
Part (i) follows immediately from the definition \eqref{MLV}. To prove (ii) we differentiate \eqref{MLV} with respect to variable $t$ to obtain
$$
0= \int_{\lambda_-(t)}^{Z(t,y)}\, u_t(t,x)\,dx+Z_t(t,y)u(t,Z)+\frac{d\lambda_-(t)}{dt}\,u(t,\lambda_-(t)).
$$
Using boundary condition \eqref{tfe2} and equation \eqref{tfe1}, we get
$$
0=u(t,Z)\Big(Z_t(t,y)-u_{xxx}(t,Z)\Big),
$$
{since $u\,u_{xxx}=0$ at $x=\lambda_{\pm}(t)$.} This obviously leads to (ii).

Now we differentiate \eqref{MLV} with respect to variable $y$ to obtain
$$
Z_y(t,y)\,u(t, Z(t,y))=u^\infty(y),
$$
which is exactly  (iii).
\end{proof}
\begin{rem}$\;${\rm
\begin{itemize}
\item[1)] By choosing $y=1$ in \eqref{MLV} and using the fact that $Z(t,1)=\lambda_+(t)$ we get
$$M=\int_{\lambda_-(t)}^{\lambda_+(t)}\,u(t,x)\,dx=\int_{-1}^{1}u^\infty(x)\,dx=\frac{2}{3}.$$
\item[2)] Making the change of variable $x=Z(t,y)$ and using (iii) of the previous proposition, we see that the center of mass given by \eqref{CM} reads
\begin{equation}
\label{CMZ}
\mu=\int_{\lambda_-(t)}^{\lambda_+(t)}\,x\,u(t,x)\,dx=\frac{1}{2}\int_{-1}^1\, \left(1-y^2\right)\,Z(t,y)\,dy.
\end{equation}
\end{itemize}}
\end{rem}

In order to derive an evolution equation in the Lagrangian coordinates, observe that by the previous proposition we have
\begin{equation}
\label{uZinfty}
Z_y(t,y)\,u(t, Z(t,y))=u^\infty(y).
\end{equation}
The relation \eqref{uZinfty} suggests to define
\begin{equation}
\label{G}
G=\frac{1}{Z_y}.
\end{equation}
The evolution equation satisfied by $G$ is given in the following proposition.
\begin{prop}
\label{GG}
The function $G$ defined by \eqref{G} solves
\begin{subequations}\label{tfG}
\begin{align}
G_t+G^2\partial_y\Big(\left(G\partial_y\right)^3\left(u^\infty\,G\right)\Big)&=0, \quad\quad\quad \mbox{for $t>0$ and $y\in \left(-1,1\right)$},\label{tfG1}\\
G(t,\pm1)& =1, \quad\quad\quad \mbox{for $t > 0$}.\label{tfG2}
\end{align}
\end{subequations}
\end{prop}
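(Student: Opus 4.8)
The plan is to derive the evolution equation \eqref{tfG1} directly from the two structural identities in Proposition \ref{Z}, namely $Z_t = u_{xxx}(t,Z)$ and $Z_y = u^\infty/\tilde u$, the latter being equivalent to $\tilde u = u^\infty G$ with $\tilde u(t,y) = u(t,Z(t,y))$, and then to read off the boundary condition \eqref{tfG2} from the prescribed contact angle \eqref{tfe3}. The single observation that drives everything is that, at fixed $t$, the change of variables $x = Z(t,y)$ turns the spatial derivative into $\partial_x = Z_y^{-1}\partial_y = G\,\partial_y$. Iterating this three times and using $\tilde u = u^\infty G$, I expect to obtain the key identity $u_{xxx}(t,Z(t,y)) = (G\partial_y)^3(u^\infty G)$.

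For the equation itself, I would first differentiate the definition \eqref{G}, $G = 1/Z_y$, in time, which gives $G_t = -Z_y^{-2}\,\partial_t Z_y = -G^2\,\partial_t Z_y$. Assuming enough smoothness to interchange the mixed derivatives, $\partial_t Z_y = \partial_y Z_t$, and I then substitute $Z_t = u_{xxx}(t,Z)$ from Proposition \ref{Z}(ii). Replacing $u_{xxx}(t,Z)$ by $(G\partial_y)^3(u^\infty G)$ through the chain-rule identity above yields precisely $G_t + G^2\partial_y\big((G\partial_y)^3(u^\infty G)\big) = 0$, which is \eqref{tfG1}. The only care needed here is to justify the commutation $\partial_t Z_y = \partial_y Z_t$ and the repeated chain rule at the regularity available for the solution.

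For the boundary condition I would use \eqref{tfe3}. From the first instance of the chain-rule identity, $u_x = G\,\partial_y(u^\infty G)$, and the product rule, $u_x = G^2\,(u^\infty)_y + G\,u^\infty\,G_y$. Evaluating at $y = \pm 1$, where $u^\infty$ vanishes while $(u^\infty)_y(\pm1) = \mp 1$, so that $|(u^\infty)_y(\pm1)| = 1$, the second term drops and I obtain $|u_x(t,\lambda_\pm(t))| = G(t,\pm1)^2$. Since $G = 1/Z_y > 0$, because the map $y\mapsto Z(t,y)$ is increasing, the contact-angle condition $|u_x(t,\lambda_\pm(t))| = 1$ forces $G(t,\pm1) = 1$, i.e.\ \eqref{tfG2}.

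The step I expect to be the only genuine obstacle — more a matter of rigor than of difficulty — is making the chain-rule translation $\partial_x \mapsto G\,\partial_y$ precise up to third order, and checking that $u$ and $u^\infty$ degenerate compatibly at the endpoints, so that $G(t,\pm1) = 1$ truly follows from the prescribed slope \eqref{tfe3} rather than from the indeterminate quotient $\tilde u/u^\infty$ at $y = \pm 1$. Once the identity $u_{xxx}(t,Z) = (G\partial_y)^3(u^\infty G)$ is established, the remainder is a short and direct computation.
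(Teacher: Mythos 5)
Your derivation of the evolution equation \eqref{tfG1} is exactly the paper's: the same chain-rule identity $\partial_x = G\,\partial_y$ in the Lagrangian variable, iterated to give $(G\partial_y)^3(u^\infty G) = u_{xxx}(t,Z)$, combined with $G_t = -G^2\,\partial_y Z_t$ and Proposition \ref{Z}(ii). Where you genuinely diverge is the boundary condition \eqref{tfG2}. The paper applies L'H\^opital's rule to the quotient $Z_y = u^\infty/u(t,Z)$ from \eqref{uZinfty}, whose numerator and denominator both vanish as $y\to\pm1$, obtaining $\left(Z_y(t,\pm1)\right)^2 = (u^\infty)'(\pm1)/u_x(t,\lambda_\pm(t))$ and hence $\left(Z_y(t,\pm1)\right)^2 = 1$; you instead expand $u_x(t,Z) = G^2\,(u^\infty)' + u^\infty G\,G_y$ by the product rule and evaluate at the endpoints, discarding the second term because $u^\infty(\pm1)=0$. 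The two routes land on the same identity $|u_x(t,\lambda_\pm(t))| = G(t,\pm1)^2$, and both conclude from positivity of $G$ and the contact-angle condition \eqref{tfe3}. The one caveat in your version --- which you flag yourself --- is that discarding $u^\infty G\,G_y$ requires $G_y$ (equivalently $Z_{yy}/Z_y^2$) to remain bounded, or at least $o\!\left((1-y^2)^{-1}\right)$, near $y=\pm1$; the paper's L'H\^opital argument instead presupposes that $Z_y$ extends continuously to the closed interval and that the limit of the derivative quotient exists. For the global smooth solutions assumed in Theorem \ref{MainThm} both hypotheses hold, so your argument is complete at the paper's level of rigor; it trades the one-line L'H\^opital computation for a product-rule expansion plus an explicit boundary-regularity check, which arguably makes the mechanism (why the prescribed slope, rather than the indeterminate quotient $\tilde u/u^\infty$, pins down $G$ at the contact points) more transparent.
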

\begin{proof}
Differentiate \eqref{G} with respect to $t$ yields
\begin{equation}
\label{Gt}
G_t=-\frac{Z_{ty}}{Z_y^2}=-G^2\,Z_{ty}.
\end{equation}
Since, by Proposition \ref{Z} (iii), $u^\infty(y)\,G(t,y)=u(t, Z(t,y))$, it follows that $\partial_y(u^\infty\,G)=Z_y\,u_x=\frac{1}{G}\,u_x$. Therefore
$$
G\,\partial_y\left(u^\infty\,G\right)=u_x(t,Z).
$$
Similarly we obtain that
$$
\left(G\,\partial_y\right)^2\,\left(u^\infty\,G\right)=u_{xx}(t,Z)\quad\mbox{and}\quad \left(G\,\partial_y\right)^3\,\left(u^\infty\,G\right)=u_{xxx}(t,Z).
$$
Using Part (ii) in Proposition \ref{Z} and \eqref{Gt}, we infer
\begin{eqnarray*}
G_t&=&-G^2\,Z_{ty}\\
&=&-G^2\,\partial_y\,Z_t\\
&=&-G^2\,\partial_y\,\left(u_{xxx}(t,Z)\right)\\
&=&-G^2\,\partial_y\,\Big(\left(G\,\partial_y\right)^3\,\left(u^\infty\,G\right)\Big)
\end{eqnarray*}
This leads to \eqref{tfG1}. To prove \eqref{tfG2} we use the L'H\^opital rule to deduce that
$$
Z_y(t,1)=\frac{(u^\infty)'(1)}{Z_y(t,1)\,u_x(t,\lambda_+(t))}.
$$
Hence $(Z_y(t,1))^2=1$. By \eqref{uZinfty}, $Z_y\geq 0$. Then, using \eqref{G} we get \eqref{tfG2}. Similarly for $y=-1.$
\end{proof}
Since we want to study perturbations of $G^\infty=\frac{1}{Z^\infty_y}=1$, we set
\begin{equation}
\label{defg}
G=1+g.
\end{equation}
Refereeing to \eqref{tfG}, we obtain that
\begin{subequations}\label{tfgg}
\begin{align}
g_t+{\mathcal L}\,g&={\mathcal N}(g), \quad\quad\quad \mbox{for $t>0$ and $y\in \left(-1,1\right)$},\label{tfgg1}\\
g(t,\pm1)& =0, \quad\quad\quad \mbox{for $t > 0$},\label{tfgg2}
\end{align}
\end{subequations}
where
\begin{equation}
\label{L}
{\mathcal L}\,g=-10\partial_y^2\,g-5y\,\partial_y^3\,g+\frac{1-y^2}{2}\,\partial_y^4\,g,
\end{equation}
and
\begin{eqnarray}
\nonumber
{\mathcal N}(g)&=&-25(1+g)^4\,\left(\partial_y\,g\right)^2-10\left[(1+g)^5-1\right]\,\partial_y^2\,g\\
&-&\nonumber15y(1+g)^3\left(\partial_y\,g\right)^3-30y(1+g)^4\,\partial_yg\,\partial_y^2\,g\\
&-&\label{NN}5y\left[(1+g)^5-1\right]\partial_y^3\,g+\frac{1-y^2}{2}(1+g)^2\,(\partial_y\,g)^4\\
&+&\nonumber\frac{11}{2} (1-y^2)(1+g)^3 (\partial_y\,g)^2 \partial_y^2\,g\\
&+&\nonumber2(1-y^2)(1+g)^4\,(\partial_y^2\,g)^2+\frac{7}{2}(1-y^2)(1+g)^4\,\partial_y\,g\,\partial_y^3\,g\\
&+&\nonumber\frac{1-y^2}{2}\left[(1+g)^5-1\right]\partial_y^4\,g.
\end{eqnarray}

Using \eqref{uZinfty}, \eqref{G} and \eqref{defg}, we may rewrite the energy given by \eqref{En} as
\begin{equation}
\label{E}
E(t)=\frac{1}{2}\|\partial_y\,g(t)\|_{2}^2.
\end{equation}
We also introduce the dissipation
\begin{equation}
\label{D}
D(t)=8\|\partial_y^2\,g(t)\|_{2}^2+\frac{1}{2}\|\sqrt{1-y^2}\,\partial_y^3\,g(t)\|_{2}^2,
\end{equation}
as well as the following quantities
\begin{eqnarray*}
{\mathbf I}_1&=& 25\int_{-1}^1\,(1+g)^4\, (\partial_y\,g)^2\,\partial_y^2\,g\,dy,\;\;\; {\mathbf I}_2= 10\int_{-1}^1\,\left[(1+g)^5-1\right]\,\left(\partial_y^2\,g\right)^2\,dy,\\
{\mathbf I}_3&=&15\int_{-1}^1\,y(1+g)^3\,\left(\partial_y\,g\right)^3\,\partial_y^2\,g\,dy,\;\;\;
{\mathbf I}_4= 30\int_{-1}^1\,y (1+g)^4\,\partial_y\,g\left(\partial_y^2\,g\right)^2\,dy,\\
{\mathbf I}_5&=& 5\int_{-1}^1\,y\left[(1+g)^5-1\right]\,\partial_y^3\,g\,\partial_y^2\,g\,dy,\;\;\;{\mathbf I}_6=-\frac{1}{2}\int_{-1}^1\,(1-y^2) (1+g)^2\,\left(\partial_y\,g\right)^4\,\partial_y^2\,g\,dy,\\
{\mathbf I}_7&=& -\frac{11}{2}\int_{-1}^1\,(1-y^2)(1+g)^3\,\left(\partial_y\,g\right)^2\,\left(\partial_y^2\,g\right)^2\,dy,\;\;\; {\mathbf I}_8=-2\int_{-1}^1\,(1-y^2)\,(1+g)^4\,\left(\partial_y^2\,g\right)^3\,dy,\\
{\mathbf I}_9&=& -\frac{7}{2}\int_{-1}^1\,(1-y^2)\,(1+g)^4\, \partial_y\,g\,\partial_y^3\,g\,\partial_y^2\,g\,dy,\;\;\;
{\mathbf I}_{10}=-\frac{1}{2}\int_{-1}^1\,(1-y^2)\left[(1+g)^5-1\right]\,\partial_y^4\,g\,\partial_y^2\,g\,dy.
\end{eqnarray*}
Our aim now is to obtain a first energy estimate.
We have obtained the following.
\begin{prop}
\label{EE1}
Le $E$ and $D$ be given by \eqref{E} and \eqref{D} respectively. Then
\begin{equation}
\label{Energy}
\frac{dE}{dt}+D\leq  \sum_{k=1}^{10}\,{\mathbf I}_k.
\end{equation}
\end{prop}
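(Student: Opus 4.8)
The plan is to test the evolution equation \eqref{tfgg1} against $-\partial_y^2 g$ and integrate over $(-1,1)$, keeping careful track of the boundary contributions. Since $E(t)=\frac{1}{2}\|\partial_y g(t)\|_2^2$, differentiating in time and integrating by parts once gives
\begin{equation*}
\frac{dE}{dt}=\int_{-1}^1 \partial_y g\,\partial_y g_t\,dy=\big[\partial_y g\,g_t\big]_{-1}^1-\int_{-1}^1 \partial_y^2 g\,g_t\,dy.
\end{equation*}
The boundary term vanishes because $g(t,\pm1)=0$ for all $t$ forces $g_t(t,\pm1)=0$. Substituting $g_t=-{\mathcal L}g+{\mathcal N}(g)$ from \eqref{tfgg1} then yields
\begin{equation*}
\frac{dE}{dt}=\int_{-1}^1 \partial_y^2 g\,{\mathcal L}g\,dy-\int_{-1}^1 \partial_y^2 g\,{\mathcal N}(g)\,dy,
\end{equation*}
so the proof reduces to analyzing the two integrals separately.

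Next I would compute the linear contribution $\int_{-1}^1 \partial_y^2 g\,{\mathcal L}g\,dy$ using the explicit form \eqref{L}. The first term gives $-10\|\partial_y^2 g\|_2^2$ directly. For the term $-5\int y\,\partial_y^2 g\,\partial_y^3 g\,dy$ I would write $\partial_y^2 g\,\partial_y^3 g=\frac{1}{2}\partial_y((\partial_y^2 g)^2)$ and integrate by parts, producing $+\frac{5}{2}\|\partial_y^2 g\|_2^2$ together with a boundary term $-\frac{5}{2}[y(\partial_y^2 g)^2]_{-1}^1$. For the fourth-order term $\frac{1}{2}\int(1-y^2)\partial_y^2 g\,\partial_y^4 g\,dy$ I would integrate by parts twice; the weight $1-y^2$ vanishes at $y=\pm1$, so the first boundary term drops, and one is left with $-\frac{1}{2}\|\sqrt{1-y^2}\,\partial_y^3 g\|_2^2$, a term $-\frac{1}{2}\|\partial_y^2 g\|_2^2$, and a further boundary term $+\frac{1}{2}[y(\partial_y^2 g)^2]_{-1}^1$. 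Collecting all contributions, the coefficient of $\|\partial_y^2 g\|_2^2$ adds up to $-10+\frac{5}{2}-\frac{1}{2}=-8$ and the boundary coefficients to $-\frac{5}{2}+\frac{1}{2}=-2$, so that
\begin{equation*}
\int_{-1}^1 \partial_y^2 g\,{\mathcal L}g\,dy=-D-2\big[y(\partial_y^2 g)^2\big]_{-1}^1,
\end{equation*}
with $D$ as in \eqref{D}.

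The decisive observation is the sign of the surviving boundary term: since
\begin{equation*}
\big[y(\partial_y^2 g)^2\big]_{-1}^1=\big(\partial_y^2 g(t,1)\big)^2+\big(\partial_y^2 g(t,-1)\big)^2\ge 0,
\end{equation*}
the contribution $-2[y(\partial_y^2 g)^2]_{-1}^1$ is nonpositive and can simply be discarded to obtain an inequality. It then remains to identify the nonlinear part: expanding ${\mathcal N}(g)$ according to \eqref{NN} and multiplying each of its ten terms by $-\partial_y^2 g$, a termwise comparison shows that $-\int_{-1}^1 \partial_y^2 g\,{\mathcal N}(g)\,dy$ is exactly $\sum_{k=1}^{10}{\mathbf I}_k$, the signs and weights being arranged precisely so that each summand reproduces the corresponding ${\mathbf I}_k$. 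Combining the three displays gives
\begin{equation*}
\frac{dE}{dt}+D=-2\big[y(\partial_y^2 g)^2\big]_{-1}^1+\sum_{k=1}^{10}{\mathbf I}_k\le \sum_{k=1}^{10}{\mathbf I}_k,
\end{equation*}
which is \eqref{Energy}. The only real subtlety is the bookkeeping of boundary terms: one must verify that $g_t$ vanishes at $\pm1$, that the $(1-y^2)$-weighted fourth-order term produces no boundary contribution, and---most importantly---that the one boundary term which does survive carries the favorable sign, which is exactly what turns the underlying identity into the claimed inequality.
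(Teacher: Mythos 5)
Your proof is correct and follows essentially the same route as the paper: testing \eqref{tfgg1} against $-\partial_y^2 g$, integrating by parts with the weight $1-y^2$ killing one boundary term, identifying $-\int_{-1}^1 \partial_y^2 g\,{\mathcal N}(g)\,dy=\sum_{k=1}^{10}{\mathbf I}_k$ termwise, and observing that the surviving boundary term $-2\big[y(\partial_y^2 g)^2\big]_{-1}^1=-2\big((g_{yy}(1))^2+(g_{yy}(-1))^2\big)$ is nonpositive. Your bookkeeping of the coefficients ($-10+\tfrac{5}{2}-\tfrac{1}{2}=-8$ and $-\tfrac{5}{2}+\tfrac{1}{2}=-2$) reproduces exactly the paper's identity, so nothing is missing.
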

\begin{proof}
Multiplying \eqref{tfgg1} by $-\partial_y^2\,g$ and integrating in $y\in(-1,1)$ we get, using \eqref{tfgg2}
\begin{equation*}
\frac{dE}{dt}+D=-2\left[\left(g_{yy}(1)\right)^2+\left(g_{yy}(-1)\right)^2\right]+ \sum_{k=1}^{10}\,{\mathbf I}_k.
\end{equation*}
In fact, since $g(t,\pm 1)=0$ for all $t>0$, we have
\begin{eqnarray*}
-\int_{-1}^1g_tg_{yy}dy&=&-[g_tg_y]_{-1}^1+\int_{-1}^1(\partial_tg_y)g_ydy\\&=&\frac{1}{2}\partial_t\left(\int_{-1}^1g_y^2dy\right):=\frac{dE}{dt}.
\end{eqnarray*}
Also,
\begin{eqnarray*}-\int_{-1}^1\frac{1-y^2}{2}\partial_y^4 g\partial_y^2gdy&=&-\left[\frac{1-y^2}{2}\partial_y^3g\partial_y^2g\right]_{-1}^1+\int_{-1}^1\frac{1-y^2}{2}\left(\partial_y^3g\right)^2dy-\int_{-1}^1y\partial_y^2g\partial_y^3gdy\\ &=&\int_{-1}^1\frac{1-y^2}{2}\left(\partial_y^3g\right)^2dy-\int_{-1}^1y\partial_y^2g\partial_y^3gdy
\end{eqnarray*}
Hence
\begin{eqnarray*}5\int_{-1}^1y\partial_y^3g \partial_y^2gdy-\int_{-1}^1\frac{1-y^2}{2}\partial_y^4 g\partial_y^2 gdy&=&\int_{-1}^1{\frac{1-y^2}{2}}\left(\partial_y^3 g\right)^2dy+4\int_{-1}^1y\partial_y^2 g\partial_y^3 gdy\\&=&\int_{-1}^1
{\frac{1-y^2}{2}}\left(\partial_y^3 g\right)^2dy+4\int_{-1}^1y \frac{1}{2}\partial_y\left((\partial_y^2 g)^2\right)dy\\&=&\int_{-1}^1{\frac{1-y^2}{2}}\left(\partial_y^3 g\right)^2dy+2[y\left(\partial_y^2g\right)^2]_{-1}^1-2\int_{-1}^1\left(\partial_y^2 g\right)^2dy.
\end{eqnarray*}
Then
\begin{eqnarray*}&&10\int_{-1}^1\left(\partial_y^2g\right)^2dy+5\int_{-1}^1y\partial_y^3g \partial_y^2gdy-\int_{-1}^1\frac{1-y^2}{2}\partial_y^4 g\partial_y^2gdy=\\&&\int_{-1}^1\frac{1-y^2}{2}\left(\partial_y^3g\right)^2dy+2[y\left(\partial_y^2g\right)^2]_{-1}^1+8\int_{-1}^1\left(\partial_y^2g\right)^2dy=\\&&2\left[y\left(\partial_y^2g\right)^2\right]_{-1}^1+D(t),
\end{eqnarray*}
and we get \eqref{Energy}.

\end{proof}
\section{Useful tools}
In this section, we recall some known and useful tools. Then, we use them to obtain crucial estimates needed in our proofs.
\begin{prop}[{\tt Poincar\'e's inequality}]
\label{PI}
Suppose $I=(a,b)\subset\R$ is a bounded interval and $1\leq\,p,\,q\leq\infty$. Then, for every $u\in\,W^{1,p}_0(I)$, we have
\begin{equation}
\label{PoIn}
\|u\|_{q}\leq\,|I|^{\frac{1}{q}-\frac{1}{p}+1}\,
\|u'\|_{p}.
\end{equation}
\end{prop}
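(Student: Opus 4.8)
The plan is to reduce everything to a pointwise bound obtained from the fundamental theorem of calculus. Since $u\in W^{1,p}_0(I)$ vanishes at the left endpoint of $I=(a,b)$, for any $x\in I$ I would write $u(x)=\int_a^x u'(s)\,ds$, so that $|u(x)|\le \int_a^b |u'(s)|\,ds=\|u'\|_1$ uniformly in $x$. This already yields the crude bound $\|u\|_\infty\le\|u'\|_1$, which is the heart of the argument.

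Next I would convert the $L^1$ norm of $u'$ into an $L^p$ norm via H\"older's inequality. Writing $|u'|=|u'|\cdot 1$ and applying H\"older with conjugate exponents $p$ and $p'$ gives $\|u'\|_1\le |I|^{1-\frac1p}\,\|u'\|_p$, since $\|1\|_{L^{p'}(I)}=|I|^{1/p'}=|I|^{1-\frac1p}$. Combining the two estimates produces $\|u\|_\infty\le |I|^{1-\frac1p}\,\|u'\|_p$, which is exactly the claimed inequality in the borderline case $q=\infty$, where $\frac1q=0$.

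For finite $q$ I would simply trade the sup bound against the measure of $I$: estimating $|u(x)|\le\|u\|_\infty$ pointwise gives $\|u\|_q\le |I|^{\frac1q}\,\|u\|_\infty$. Substituting the bound for $\|u\|_\infty$ obtained above then yields
$$
\|u\|_q\le |I|^{\frac1q}\,|I|^{1-\frac1p}\,\|u'\|_p=|I|^{\frac1q-\frac1p+1}\,\|u'\|_p,
$$
which is the desired estimate for all $1\le p,q\le\infty$.

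Since every step is elementary, there is no genuine obstacle here; the only points deserving care are the endpoint exponents. I would verify the case $p=\infty$, where H\"older degenerates to $\|u'\|_1\le |I|\,\|u'\|_\infty$, consistent with the exponent $1-\frac1p=1$, and the case $q=\infty$, where $|I|^{1/q}=1$ and the bound reduces to the $L^\infty$ estimate already established. These checks confirm that the single formula holds uniformly across the full range of exponents.
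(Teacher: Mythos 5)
Your proof is correct and follows essentially the same route as the paper's: the fundamental theorem of calculus from the vanishing endpoint, H\"older's inequality to pass from $\|u'\|_1$ to $|I|^{1-\frac1p}\|u'\|_p$, and then trading the resulting $L^\infty$ bound against $|I|^{\frac1q}$ to cover finite $q$. The only cosmetic difference is that the paper integrates $|u(x)|^q$ directly rather than factoring through $\|u\|_q\le|I|^{1/q}\|u\|_\infty$, which is the same computation.
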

\begin{proof}
For $x\in I$ we have $u(x)=\int_a^x\,u'(t)\,dt$. Hence, for all $x\in I$
\begin{equation}
\label{PC1}
|u(x)|\leq \int_{I}\,|u'(t)|\,dt\leq |I|^{1-\frac{1}{p}}\,\|u'\|_p.
\end{equation}
This proves \eqref{PoIn} for $q=\infty$. Suppose now $q<\infty$. Then, by \eqref{PC1}, we obtain that
$$
\int_{I}\,|u(x)|^q\,dx\leq |I|^{1+q-\frac{q}{p}}\,\|u'\|_p^q.
$$
This leads to \eqref{PoIn} as desired.
\end{proof}
\begin{rem}
{\rm For $u\in W^{1,p}(I)$ we introduce
$$
\mathcal{Z}(u):=\Big\{\, x\in \bar{I};\;\;\; u(x)=0\,\Big\}.
$$
This definition makes sense since $W^{1,p}(I)\hookrightarrow C(\bar{I})$. We also define
$$
\tilde{W}^{1,p}(I):=\Big\{\, u\in W^{1,p}(I);\;\;\; \mathcal{Z}(u)\neq \emptyset\,\Big\}.
$$
Clearly $W^{1,p}_0(I)\subset \tilde{W}^{1,p}(I).$}
\end{rem}
A more general statement of Poincar\'e's inequality can be stated as follows.
\begin{prop}
\label{PIG}
Suppose $I=(a,b)\subset\R$ is a bounded interval and $1\leq\,p,\,q\leq\infty$. Then, for every $u\in\,\tilde{W}^{1,p}(I)$, we have
\begin{equation}
\label{PoInG}
\|u\|_{q}\leq\,|I|^{\frac{1}{q}-\frac{1}{p}+1}\,
\|u'\|_{p}.
\end{equation}
\end{prop}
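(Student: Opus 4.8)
The plan is to mimic the proof of Proposition \ref{PI} almost verbatim, replacing the fixed endpoint $a$ (at which a $W_0^{1,p}$ function vanishes) by a zero of $u$ whose existence is guaranteed by the hypothesis $u \in \tilde{W}^{1,p}(I)$. Concretely, since $u \in \tilde{W}^{1,p}(I)$ means $\mathcal{Z}(u) \neq \emptyset$, I first fix a point $x_0 \in \bar{I}$ with $u(x_0) = 0$. Because $W^{1,p}(I) \hookrightarrow C(\bar{I})$ and $u$ is absolutely continuous on $\bar{I}$, I can represent $u$ by the fundamental theorem of calculus based at $x_0$, namely
$$
u(x) = \int_{x_0}^x u'(t)\,dt \qquad \text{for all } x \in \bar{I}.
$$

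From this representation the estimate proceeds exactly as before. For every $x \in I$, regardless of whether $x \lessgtr x_0$, the integral is bounded by the integral over all of $I$, so by H\"older's inequality
$$
|u(x)| \leq \int_I |u'(t)|\,dt \leq |I|^{1-\frac1p}\,\|u'\|_p,
$$
which already gives \eqref{PoInG} in the case $q = \infty$. For finite $q$ I would raise this pointwise bound to the $q$-th power and integrate over $I$, obtaining $\int_I |u(x)|^q\,dx \leq |I|^{1 + q - \frac{q}{p}}\,\|u'\|_p^q$, and then take $q$-th roots to recover the exponent $\frac1q - \frac1p + 1$ on $|I|$.

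I do not expect a genuine obstacle here, since the statement is only a mild relaxation of the hypothesis in Proposition \ref{PI}: the requirement $u(a)=u(b)=0$ is weakened to the existence of a single zero somewhere in $\bar{I}$, and the constant is unchanged. The one point deserving a word of care is the validity of the base-point representation when $x_0$ is an interior point of $I$ rather than an endpoint; this is where the Sobolev embedding into $C(\bar I)$ (equivalently, the absolute continuity of $u$) is used, and it makes the one-sided integrals $\int_{x_0}^x u'$ meaningful and equal to $u(x)$ for points on either side of $x_0$. Since in all cases $\bigl|\int_{x_0}^x u'\bigr| \leq \int_I |u'|$, the final bound is insensitive to the location of $x_0$, and the argument closes.
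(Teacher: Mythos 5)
Your proof is correct and is exactly the argument the paper intends: Proposition~\ref{PIG} is stated without proof precisely because it follows from the proof of Proposition~\ref{PI} verbatim, with the base point $a$ replaced by a zero $x_0\in\bar{I}$ of $u$, which is what you do. The pointwise bound $|u(x)|\leq\int_I|u'|\leq|I|^{1-\frac{1}{p}}\|u'\|_p$ and the subsequent integration for finite $q$ are carried out correctly, so there is nothing to add.
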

From Proposition \ref{PIG} we deduce the following Poincar\'e-Wirtinger inequality.
\begin{prop}[{\tt Poincar\'e-Wirtinger's inequality}]
\label{PW}
Let $1\leq p, q\leq\infty$ and $I=(a,b)\subset\R$ a bounded interval. Then, for any $u\in W^{1,p}(I)$, we have
\begin{equation}
\label{PoWi}
\|u-\overline{u}\|_{q}\leq\,|I|^{\frac{1}{q}-\frac{1}{p}+1}\,\|u'\|_{p},
\end{equation}
where
$$
\overline{u}=\frac{1}{|I|}\,\int_{I}\,u(y)\,dy.
$$
\end{prop}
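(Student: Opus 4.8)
The plan is to reduce Proposition \ref{PW} to the generalized Poincar\'e inequality of Proposition \ref{PIG}. Given $u\in W^{1,p}(I)$, I would introduce the mean-zero shift
\begin{equation*}
v:=u-\overline{u},\qquad \overline{u}=\frac{1}{|I|}\int_I u(y)\,dy.
\end{equation*}
Since $\overline{u}$ is a constant, $v\in W^{1,p}(I)$ with $v'=u'$. If I can show that $v\in\tilde{W}^{1,p}(I)$, then Proposition \ref{PIG} applied to $v$ yields immediately
\begin{equation*}
\|u-\overline{u}\|_{q}=\|v\|_{q}\leq |I|^{\frac{1}{q}-\frac{1}{p}+1}\,\|v'\|_{p}=|I|^{\frac{1}{q}-\frac{1}{p}+1}\,\|u'\|_{p},
\end{equation*}
which is exactly \eqref{PoWi}. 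So the entire content reduces to verifying the membership $v\in\tilde{W}^{1,p}(I)$, i.e.\ that $\mathcal{Z}(v)\neq\emptyset$.

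The key step is therefore to exhibit a point $c\in\bar{I}$ at which $v(c)=0$, equivalently $u(c)=\overline{u}$. Here I would invoke the Sobolev embedding $W^{1,p}(I)\hookrightarrow C(\bar{I})$ noted in the remark preceding Proposition \ref{PIG}, so that $u$ is continuous on the compact interval $\bar{I}$ and attains its minimum $m$ and maximum $M$. The average $\overline{u}$ is a convex combination of the values of $u$ and hence satisfies $m\leq\overline{u}\leq M$; by the intermediate value theorem (or, equivalently, the mean value theorem for integrals applied to the continuous function $u$) there exists $c\in\bar{I}$ with $u(c)=\overline{u}$. Consequently $v(c)=u(c)-\overline{u}=0$, so $c\in\mathcal{Z}(v)$ and $v\in\tilde{W}^{1,p}(I)$, completing the reduction.

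There is no serious obstacle here: the only delicate point is the appeal to continuity of $u$, which is precisely what makes the zero set $\mathcal{Z}(v)$ well defined and guarantees that a point realizing the mean exists. I would make sure to state the embedding explicitly so that the intermediate value argument is justified, and note that the exponent $\frac{1}{q}-\frac{1}{p}+1$ is inherited verbatim from Proposition \ref{PIG} without any further bookkeeping, since passing from $u$ to $v$ leaves the derivative and the length $|I|$ unchanged.
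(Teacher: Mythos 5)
Your proof is correct and follows exactly the paper's own argument: shift to $v=u-\overline{u}$, use continuity (via the embedding $W^{1,p}(I)\hookrightarrow C(\bar{I})$) and zero mean to produce a point where $v$ vanishes, so that Proposition \ref{PIG} applies with $v'=u'$. The only difference is that you spell out the intermediate value argument that the paper leaves implicit in the sentence ``since $\int_I v=0$ and $v\in C(\bar{I})$, then $v\in\tilde{W}^{1,p}(I)$.''
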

\begin{proof}
Let $u\in W^{1,p}(I)$ and define $v=u-\overline{u}$. Since $\int_{I}\,v=0$ and $v\in C(\bar{I})$, then $v\in \tilde{W}^{1,p}(I)$. Applying \eqref{PoInG} with $v$ we conclude the proof since $v'=u'$.
\end{proof}
Note that, if $u\in W^{2,p}(I)\cap W^{1,p}_0(I)$, then $\overline{u'}=0$. Hence, by applying \eqref{PoWi} with $u'$ instead of $u$, we obtain the following useful inequality.
\begin{cor}
Let $1\leq p, q\leq\infty$ and $I=(a,b)\subset\R$ a bounded interval. Then, for any $u\in W^{2,p}(I)\cap W^{1,p}_0(I)$, we have
\begin{equation}
\label{pw1}
\|u'\|_{q}\leq\,|I|^{\frac{1}{q}-\frac{1}{p}+1}\,\|u''\|_{p}.
\end{equation}
\end{cor}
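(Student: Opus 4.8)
The plan is to obtain \eqref{pw1} as a direct application of the Poincar\'e--Wirtinger inequality \eqref{PoWi} to the derivative $u'$, exactly as the sentence preceding the statement suggests. First I would observe that the hypothesis $u\in W^{2,p}(I)$ guarantees that the function $v:=u'$ lies in $W^{1,p}(I)$, with weak derivative $v'=u''\in L^p(I)$. Hence Proposition \ref{PW} is applicable to $v$, and this is the only regularity check needed before invoking \eqref{PoWi}.

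The key step is to verify that the average $\overline{u'}$ vanishes, so that the left-hand side of \eqref{PoWi} collapses to $\|u'\|_q$. Here I would use that $u\in W^{1,p}_0(I)$: by the embedding $W^{1,p}(I)\hookrightarrow C(\bar I)$ the function $u$ is (absolutely) continuous on $\bar I$ and satisfies the boundary conditions $u(a)=u(b)=0$. The fundamental theorem of calculus for Sobolev functions then yields
$$
\overline{u'}=\frac{1}{|I|}\int_I u'(y)\,dy=\frac{u(b)-u(a)}{|I|}=0.
$$

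It then remains only to substitute this into \eqref{PoWi}. Applying that inequality with $u'$ in place of $u$, and recalling $(u')'=u''$, gives
$$
\|u'-\overline{u'}\|_q\leq |I|^{\frac{1}{q}-\frac{1}{p}+1}\,\|u''\|_p,
$$
and since $\overline{u'}=0$ the left-hand side is precisely $\|u'\|_q$, which is \eqref{pw1}. As every ingredient is already in place, there is no genuine obstacle in this argument; the only point that deserves a moment of care is the computation $\overline{u'}=0$, which is where the boundary vanishing built into $W^{1,p}_0(I)$ is used, together with the absolute continuity of $u$ on $\bar I$.
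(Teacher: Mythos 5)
Your proposal is correct and is exactly the paper's argument: the paper proves this corollary by noting that $u\in W^{1,p}_0(I)$ forces $\overline{u'}=0$ and then applying the Poincar\'e--Wirtinger inequality \eqref{PoWi} to $u'$. Your write-up merely fills in the same details (regularity of $u'$ and the fundamental theorem of calculus computation) more explicitly.
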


We also recall the following Gagliardo-Nirenberg interpolation inequalities useful for our purpose. We refer to \cite{Brezis, Nir1, Nir2} for more general statements.
\begin{prop}
\label{GNI-B}
Suppose $I=(a,b)\subset\R$ is a bounded interval, $1\leq\,q<\infty$ and $1\leq\,r\leq\,\infty$. Then, there exists a constant $C=C(q,r)>0$ such that for every $u\in W^{1,r}(I)$, we have
\begin{equation}
\label{St1}
\|u\|_\infty\leq\, C\left(1+\frac{1}{|I|}\right)\|u\|_{W^{1,r}}^{\delta}\,\|u\|_q^{1-\delta},
\end{equation}
where $0<\,\delta\,\leq\, 1$ is defined by
\begin{equation}
\label{delta}
\delta\,\left(\frac{1}{q}+1-\frac{1}{r}\right)=\frac{1}{q}.
\end{equation}
\end{prop}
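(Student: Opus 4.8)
The plan is to prove the inequality by a direct ``peak'' argument based on the one-dimensional embedding $W^{1,r}(I)\hookrightarrow C(\bar I)$, which guarantees that $u$ is continuous on $\bar I$ and attains $M:=\|u\|_\infty=|u(x_0)|$ at some $x_0\in\bar I$. Assuming first $r>1$, I would fix a length $\ell\leq|I|$ and pick a subinterval $J\subset\bar I$ of length $\ell$ having $x_0$ as one endpoint; such a $J$ always exists because $x_0\in\bar I$ and $\ell\leq|I|$. The fundamental theorem of calculus together with Hölder's inequality then gives, for every $x\in J$,
$$|u(x)|\geq M-\int_J|u'|\geq M-\ell^{1-1/r}\|u'\|_r .$$
Choosing $\ell$ small enough that $\ell^{1-1/r}\|u'\|_r\leq M/2$ forces $|u|\geq M/2$ throughout $J$, whence $\|u\|_q^q\geq\int_J|u|^q\geq(M/2)^q\ell$ and therefore $M\leq 2\,\ell^{-1/q}\|u\|_q$.

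The argument then splits according to whether the threshold length $\ell_*:=\bigl(M/(2\|u'\|_r)\bigr)^{r/(r-1)}$, at which $\ell^{1-1/r}\|u'\|_r=M/2$, fits inside $I$. If $\ell_*\leq|I|$, I would take $\ell=\ell_*$; substituting into $M\leq 2\ell_*^{-1/q}\|u\|_q$ and solving for $M$ gives $M\leq 2\|u'\|_r^{\alpha/(1+\alpha)}\|u\|_q^{1/(1+\alpha)}$ with $\alpha=\frac{r}{q(r-1)}$, and a short computation shows $\frac{\alpha}{1+\alpha}=\delta$ --- this is exactly the defining relation \eqref{delta}. Since $\|u'\|_r\leq\|u\|_{W^{1,r}}$, this already yields the claim, in fact even without the factor $1+1/|I|$.

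If instead $\ell_*>|I|$, then $\ell^{1-1/r}\|u'\|_r<M/2$ already for $\ell=|I|$, so $|u|\geq M/2$ on all of $I$; this produces simultaneously $M\leq 2|I|^{-1/q}\|u\|_q$ and $M\leq 2|I|^{-1/r}\|u\|_r\leq 2|I|^{-1/r}\|u\|_{W^{1,r}}$. Interpolating these two bounds with weights $\delta$ and $1-\delta$ gives $M\leq 2\,|I|^{-(\delta/r+(1-\delta)/q)}\|u\|_{W^{1,r}}^{\delta}\|u\|_q^{1-\delta}$, and one checks that $\delta/r+(1-\delta)/q=\delta$, again by \eqref{delta}. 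The correction factor then appears because $|I|^{-\delta}\leq 1+1/|I|$ for every $|I|>0$ and every $\delta\in(0,1]$ (treat $|I|\geq1$ and $|I|<1$ separately). The two cases combine to give \eqref{St1} with $C=2$. The remaining degenerate case $r=1$ has $\delta=1$ and is simply the embedding $W^{1,1}(I)\hookrightarrow L^\infty(I)$: averaging $|u(x)|\leq|u(z)|+\|u'\|_1$ over $z\in I$ gives $\|u\|_\infty\leq|I|^{-1}\|u\|_1+\|u'\|_1\leq(1+1/|I|)\|u\|_{W^{1,1}}$.

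I expect the only delicate points to be bookkeeping rather than conceptual: verifying that the two naturally occurring scaling exponents both collapse to $\delta$ through \eqref{delta}, and extracting the factor $1+1/|I|$ from $|I|^{-\delta}$. The geometric input --- the existence of the one-sided interval $J$ and the resulting lower bound on the height of the peak --- is elementary, and the whole scheme amounts to optimizing the single free length parameter $\ell$ against the two competing bounds coming from $\|u'\|_r$ and $\|u\|_q$.
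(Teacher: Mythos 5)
Your proof is correct in substance but takes a genuinely different route from the paper. The paper proves \eqref{St1} by the classical Gagliardo--Nirenberg device: assuming first that $u$ vanishes at an endpoint, it writes $|u|^{\alpha-1}u$ with $\alpha=1/\delta$ as the integral of its derivative, applies H\"older's inequality using the relation $r'(\alpha-1)=q$ to get $\|u\|_\infty\le\delta^{-\delta}\|u'\|_r^\delta\|u\|_q^{1-\delta}$, and then removes the vanishing hypothesis by multiplying $u$ and its reflection by an explicit cutoff $\eta$, which is where the factor $1+1/|I|$ enters. You instead run a measure-theoretic ``peak'' argument: a lower bound $|u|\ge M/2$ on an interval $J$ of length $\ell$ around the maximum point, an optimization of $\ell$ against the competing bounds coming from $\|u'\|_r$ and $\|u\|_q$, and a case split according to whether the optimal length $\ell_*$ fits inside $I$. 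Your exponent bookkeeping checks out in both cases (both $\alpha/(1+\alpha)$ and $\delta/r+(1-\delta)/q$ collapse to $\delta$ via \eqref{delta}), the bound $|I|^{-\delta}\le 1+1/|I|$ is correct, and the averaging argument for $r=1$ is fine. Your approach buys a universal constant $C=2$ (the paper's constant depends on $q$ and $r$) and avoids cutoff functions entirely; the paper's approach is shorter in the vanishing-endpoint case and gives the clean intrinsic constant $\delta^{-\delta}$ there.

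One slip to repair: it is \emph{not} true that an interval $J\subset\bar I$ of length $\ell$ with $x_0$ as an endpoint always exists when $\ell\le|I|$ (take $I=(0,2)$, $x_0=1$, $\ell=3/2$). But your argument never needs $x_0$ to be an endpoint of $J$; it only needs $x_0\in J$, since for $x\in J$ one still has $|u(x)|\ge M-\int_J|u'|$, and an interval of length $\ell\le|I|$ containing $x_0$ always exists (take $J=[x_0-\ell,x_0]$ if $x_0-a\ge\ell$, otherwise $J=[a,a+\ell]$). With that repair, and phrasing the case split as ``$|I|^{1-1/r}\|u'\|_r\le M/2$ or not'' so that $\|u'\|_r=0$ causes no trouble in the definition of $\ell_*$, the proof is complete.
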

\begin{proof}
We start with the case $r>1$. We first suppose $u(a)=0$. Then
$$
|u(x)|^{\alpha-1}u(x)=\int_a^x\,G'(u(\tau))u'(\tau)\,d\tau,
$$
where $G(\tau)=|\tau|^{\alpha-1}\tau$ and $\alpha=\frac{1}{\delta}\in (1,\infty)$. It follows by H\"older's inequality that
\begin{eqnarray*}
|u(x)|^{\alpha}&\leq&\alpha\,\int_{I}\,|u(\tau)|^{\alpha-1}\,|u'(\tau)|\,d\tau\\
&\leq&\alpha\,\|u'\|_r\,\|u\|_{r'(\alpha-1)}^{\alpha-1}\\
&\leq&\alpha\,\|u'\|_r\,\|u\|_{q}^{\alpha-1},
\end{eqnarray*}
where we have used the fact that $r'(\alpha-1)=q$. Therefore
\begin{equation}
\label{case1}
\|u\|_\infty\leq\,\alpha^{\frac{1}{\alpha}}\,\|u'\|_r^{\frac{1}{\alpha}}\,
\|u\|_{q}^{1-\frac{1}{\alpha}}\leq\,\delta^{-\delta}\,\|u'\|_r^{\delta}\,\|u\|_{q}^{1-\delta}.
\end{equation}
We now turn to the case when $u(a)\neq 0$. Let $\eta\in C^1([a,b])$ defined by
\begin{eqnarray*}
 \eta(s)&=&\; \left\{
\begin{array}{cll}\frac{4}{|I|}(s-a)-\frac{4}{|I|^2}(s-a)^2 \quad&\mbox{if}&\quad
a\leq s\leq \frac{a+b}{2},\\\\ 1 \quad
&\mbox{if}&\quad \frac{a+b}{2}\leq s\leq b.
\end{array}
\right.
\end{eqnarray*}
Clearly $0\leq\eta\leq 1$ and $0\leq\eta'\leq \frac{4}{|I|}$. Applying \eqref{case1} respectively to $v(x):=\eta(x)u(x)$ and $w(x)=\eta(x)u(a+b-x)$, we get
\begin{equation}
\label{Case11}
|u(x)|\leq C\left(1+\frac{1}{|I|}\right)\|u\|_{W^{1,r}}^{\delta}\,\|u\|_{q}^{1-\delta},\;\;\;\forall\;\;x\in\left[\frac{a+b}{2}, b\right],
\end{equation}
and
\begin{equation}
\label{Case12}
|u(y)|\leq C\left(1+\frac{1}{|I|}\right)\|u\|_{W^{1,r}}^{\delta}\,\|u\|_{q}^{1-\delta},\;\;\;\forall\;\;y\in\left[a, \frac{a+b}{2}\right].
\end{equation}
Combining \eqref{Case11} and \eqref{Case12} we obtain the desired inequality \eqref{St1} when $r>1$. This finishes the proof since the case $r=1$ is trivial.
\end{proof}
\begin{rem}
{\rm A careful inspection of the proof shows that the constant $C$ appearing in \eqref{St1} can be taken as
$$
C(q,r)=C_0\,\left(1+q-\frac{q}{r}\right)^{\frac{1}{1+q-\frac{q}{r}}},
$$
where $C_0>0$ is an absolute constant.}
\end{rem}
\begin{prop}
\label{GN1D2}
Suppose $I=(a,b)\subset\R$ is a bounded interval, $1\leq\,q<\infty$, $q\leq\,p\,\leq \infty$ and $1\leq\,r\leq\,\infty$. Then, there exists a constant $C=C(|I|, p,q,r)>0$ such that for every $u\in\,W^{2,r}(I)\cap W^{1,r}_0(I)$, we have
\begin{equation}
\label{GNI6}
\|u'\|_{p}\leq\,C\|u''\|_{r}^{\theta}\,\|u'\|_{q}^{1-\theta},
\end{equation}
where $0\leq\,\theta\leq\,1$ is defined by
\begin{equation}
\label{theta}
\theta\left(\frac{1}{q}+1-\frac{1}{r}\right)=\frac{1}{q}-\frac{1}{p}.
\end{equation}
\end{prop}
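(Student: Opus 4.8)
The plan is to reduce the statement to a single interpolation inequality for the function $v := u'$ and then combine an $L^\infty$ bound with elementary $L^p$-interpolation. The crucial structural observation is that the hypothesis $u\in W^{2,r}(I)\cap W^{1,r}_0(I)$ forces $u(a)=u(b)=0$, so that
\[
\overline{v}=\frac{1}{|I|}\int_I u'(y)\,dy=\frac{u(b)-u(a)}{|I|}=0.
\]
Since $v\in W^{1,r}(I)\hookrightarrow C(\bar I)$ has vanishing mean, either $v\equiv 0$ (in which case \eqref{GNI6} is trivial) or $v$ changes sign, and hence $v(x_0)=0$ for some $x_0\in\bar I$. In particular $v\in\tilde W^{1,r}(I)$, and this is exactly the property that will let us retain only the derivative norm on the right-hand side.

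First I would establish the homogeneous $L^\infty$ bound
\[
\|v\|_\infty\le \delta^{-\delta}\,\|v'\|_r^{\delta}\,\|v\|_q^{1-\delta},\qquad \delta\Big(\tfrac1q+1-\tfrac1r\Big)=\tfrac1q,
\]
by repeating verbatim the computation leading to \eqref{case1} in the proof of Proposition \ref{GNI-B}. Writing $\alpha=1/\delta$ and using $v(x_0)=0$,
\[
|v(x)|^{\alpha}=\alpha\int_{x_0}^{x}|v(\tau)|^{\alpha-1}\,\mathrm{sgn}(v(\tau))\,v'(\tau)\,d\tau\le \alpha\int_I |v|^{\alpha-1}|v'|\,d\tau\le \alpha\,\|v'\|_r\,\|v\|_{r'(\alpha-1)}^{\alpha-1},
\]
and the choice $r'(\alpha-1)=q$ reproduces precisely the relation \eqref{delta} defining $\delta$. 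In contrast to the generic bound \eqref{St1}, which carries the full $W^{1,r}$ norm and the factor $(1+1/|I|)$, the vanishing of $v$ at an interior or boundary point $x_0$ removes the lower-order term $\|v\|_r$; this is what makes the resulting estimate scale-homogeneous in the derivative.

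Next I would interpolate the $L^p$ norm between $L^q$ and $L^\infty$. Since $q\le p\le\infty$,
\[
\|v\|_p^p=\int_I |v|^{p-q}|v|^q\le \|v\|_\infty^{p-q}\,\|v\|_q^q,\qquad\text{so}\qquad \|v\|_p\le \|v\|_\infty^{1-q/p}\,\|v\|_q^{q/p}.
\]
Inserting the $L^\infty$ bound yields
\[
\|v\|_p\le \delta^{-\delta(1-q/p)}\,\|v'\|_r^{\,\delta(1-q/p)}\,\|v\|_q^{\,(1-\delta)(1-q/p)+q/p}.
\]
Setting $\theta:=\delta(1-q/p)$, a direct computation gives $\theta(\tfrac1q+1-\tfrac1r)=\tfrac1q-\tfrac1p$, matching \eqref{theta}, and the exponent of $\|v\|_q$ simplifies to $1-\theta$. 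Recalling $v=u'$ and $v'=u''$ then produces \eqref{GNI6} with $C=\delta^{-\theta}$.

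The main point requiring care is the reduction step rather than any delicate estimate: everything hinges on the boundary conditions producing a zero of $u'$ in $\bar I$, which upgrades the generic Gagliardo–Nirenberg inequality \eqref{St1} to the homogeneous form involving only $\|u''\|_r$ and $\|u'\|_q$. The endpoint cases serve as consistency checks: $p=\infty$ returns the sharp $L^\infty$ bound for functions with a zero, while $p=q$ forces $\theta=0$ and reduces \eqref{GNI6} to a triviality. Finally, the degenerate case $r=1$ (where $\delta=1$) is handled identically, using the Poincaré estimate $\|v\|_\infty\le \|v'\|_1$ supplied by Proposition \ref{PIG} in place of the displayed $L^\infty$ bound.
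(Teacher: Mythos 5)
Your proof is correct, but it takes a different reduction than the paper's. The paper proceeds in three steps: it first applies the inhomogeneous inequality \eqref{St1} to get $\|v\|_p\leq C\|v\|_{W^{1,r}}^{\theta}\|v\|_q^{1-\theta}$ for arbitrary $v\in W^{1,r}(I)$ (Step 1), then removes the lower-order term for mean-zero $v$ by invoking the Poincar\'e--Wirtinger inequality \eqref{PoWi} (Step 2), and finally applies this to $v=u'$ (Step 3). You instead use the mean-zero property of $u'$ at the outset to produce a zero $x_0\in\bar I$ of $u'$, and then rerun the ``vanishing at a point'' computation (the paper's \eqref{case1}, with $x_0$ in place of $a$) to obtain the homogeneous $L^\infty$ bound directly, after which the $L^q$--$L^\infty$ interpolation is identical in both arguments. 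Your route is shorter: it bypasses both the cutoff-function construction $\eta$ (needed in the proof of Proposition \ref{GNI-B} when $u(a)\neq 0$) and the Poincar\'e--Wirtinger step, and it yields the explicit scale-invariant constant $\delta^{-\theta}$, independent of $|I|$ --- strictly sharper than the $C(|I|,p,q,r)$ the statement allows. What the paper's route buys is modularity: it reuses Propositions \ref{GNI-B} and \ref{PW} as black boxes, and its intermediate inequalities \eqref{St11} and \eqref{St111} are stated for general (respectively, general mean-zero) functions, not only derivatives of functions in $W^{2,r}(I)\cap W^{1,r}_0(I)$. Two small points in your write-up: the dichotomy ``either $v\equiv 0$ or $v$ changes sign'' is better stated simply as ``a continuous mean-zero function vanishes somewhere on $\bar I$'' (which is all you use); and your identity for $|v(x)|^{\alpha}$ uses $H(\tau)=|\tau|^{\alpha}$ with $H'(\tau)=\alpha|\tau|^{\alpha-1}\,\mathrm{sgn}(\tau)$ rather than the paper's $G(\tau)=|\tau|^{\alpha-1}\tau$, which is equally valid since $\alpha>1$ makes $H\in C^1$.
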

\begin{proof}
The case $p=q$ is trivial. We will focus only on the case $q<p\leq\infty$. The proof will be divided into three steps.\\
\noindent{\bf Step 1.} We claim that
\begin{equation}
\label{St11}
\|v\|_p\leq\,C\|v\|_{W^{1,r}}^{\theta}\,\|v\|_q^{1-\theta},\;\;\forall\;\; v\in W^{1,r}(I),
\end{equation}
for some constant $C=C(|I|, p,q,r)>0$, where $\theta$ is given as in \eqref{theta}. Note that for $p=\infty$ we have $\theta=\delta$ and \eqref{St11} reduces to \eqref{St1}. To see \eqref{St11} for $p<\infty$ we write using \eqref{St1}
\begin{eqnarray*}
\|v\|_p^p&=&\int_{I}\,|v(x)|^{p-q}\,|v(x)|^q\,dx\\
&\leq&\|v\|_{\infty}^{p-q}\,\|v\|_q^q\\
&\leq&\left(C\left(1+\frac{1}{|I|}\right)\|v\|_{W^{1,r}}^{\delta}\,\|v\|_q^{1-\delta}\right)^{p-q}\,\|v\|_q^q.
\end{eqnarray*}
Therefore
$$
\|v\|_p\leq C^{1-\frac{q}{p}}\left(1+\frac{1}{|I|}\right)^{1-\frac{q}{p}}\,
\|v\|_{W^{1,r}}^{\delta(1-\frac{q}{p})}\,\|v\|_q^{(1-\delta)(1-\frac{q}{p})+\frac{q}{p}}.
$$
This leads to \eqref{St11} thanks to $\theta=\delta(1-\frac{q}{p})$ and $1-\theta=(1-\delta)(1-\frac{q}{p})+\frac{q}{p}$.\\

\noindent{\bf Step 2.} We claim that
\begin{equation}
\label{St111}
\|v\|_p\leq\,C\|v'\|_{r}^{\theta}\,\|v\|_q^{1-\theta},\;\;\forall\;\; v\in W^{1,r}(I)\;\;\;\mbox{with}\;\;\;\int_{I}\,v(x)\,dx=0,
\end{equation}
for some constant $C=C(|I|, p,q,r)>0$, where $\theta$ is given as in \eqref{theta}.\\
Since $\overline{v}=0$, we obtain by using \eqref{St11} and \eqref{PoWi} that
\begin{eqnarray*}
\|v\|_p=\|v-\overline{v}\|_p&\leq&C \|v-\overline{v}\|_{W^{1,r}}^{\theta}\,\|v-\overline{v}\|_q^{1-\theta}\\
&\leq& C\|v'\|_{r}^{\theta}\,\|v\|_q^{1-\theta}.
\end{eqnarray*}
\noindent{\bf Step 3.} Now we are ready to conclude the proof. Let $u\in\,W^{2,r}(I)\cap W^{1,r}_0(I)$. Then clearly $u'\in W^{1,r}(I)$ and $\displaystyle\int_{I}\,u'=0$. Applying \eqref{St111} to $v:=u'$ we get \eqref{GNI6}.
\end{proof}
\begin{rem}
{\rm Choose $q=r=2$ in Proposition \ref{GN1D2}, we see that $\theta=\frac{1}{2}-\frac{1}{p}$. The inequality \eqref{GNI6} takes the following form
\begin{equation}
\label{GNI22}
\|u'\|_p\lesssim \|u''\|_2^{\frac{1}{2}-\frac{1}{p}}\,\|u'\|_2^{\frac{1}{2}+\frac{1}{p}},
\end{equation}
for all $2\leq p\leq \infty$ and $u\in H^2(I)\cap H^1_0(I)$.}
\end{rem}
From the above inequalities we deduce the following estimates.
\begin{lem}
\label{ED}
Let $g=g(y)\in H^2(-1,1)\cap\, H^1_0(-1,1)$. Then, we have
\begin{itemize}
\item[(i)]\begin{equation}
\label{gE}
\|g\|_{\infty}\lesssim\, E^{1/2}.
\end{equation}
\item[(ii)] \begin{equation}
\label{ED1}
E\lesssim\,D.
\end{equation}
\item[(iii)] For every $2\leq\,p\leq \infty$,
\begin{equation}
\label{partialggg}
\|\partial_y\,g\|_p\lesssim\,E^{\frac{1}{4}+\frac{1}{2p}}\,D^{\frac{1}{4}-\frac{1}{2p}}.
\end{equation}
\item[(iv)]
\begin{equation}
\label{partialgsecond}
\|\sqrt{1-y^2}\partial_y^2\,g\|_2\lesssim\,E^{1/4}\,D^{1/4}.
\end{equation}

\item[(v)] {For every $2\leq\,p\,< \infty$,
\begin{equation}
\label{N}
\|\partial_y^2g\|_p\lesssim D^{1/2}.
\end{equation}
}
\item[(vi)] {For every $\epsilon>0$,
\begin{equation}
\label{eps}
\|(1-y^2)^\epsilon\,\partial_y^2\,g\|_\infty\lesssim\,D^{1/2}.
\end{equation}
}
\end{itemize}
\end{lem}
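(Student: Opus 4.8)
The plan is to treat the six estimates in two groups: (i)--(iv) follow quickly from the functional inequalities recalled above together with the definitions \eqref{E}, \eqref{D}, while (v)--(vi) require a genuinely new pointwise argument and are where the difficulty lies.

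First, (i) is immediate from Poincaré's inequality \eqref{PoIn} applied on $I=(-1,1)$ with $q=\infty$, $p=2$, since $g\in H^1_0$: this gives $\|g\|_\infty\lesssim\|\partial_y g\|_2=(2E)^{1/2}$. For (ii) I apply \eqref{pw1} with $p=q=2$ to obtain $\|\partial_y g\|_2\lesssim\|\partial_y^2 g\|_2$; since $D\ge 8\|\partial_y^2 g\|_2^2$ by \eqref{D}, this yields $E=\tfrac12\|\partial_y g\|_2^2\lesssim\|\partial_y^2 g\|_2^2\lesssim D$. For (iii) I use the Gagliardo--Nirenberg inequality \eqref{GNI22} with $u=g$, namely $\|\partial_y g\|_p\lesssim\|\partial_y^2 g\|_2^{1/2-1/p}\|\partial_y g\|_2^{1/2+1/p}$, and insert $\|\partial_y g\|_2\approx E^{1/2}$ and $\|\partial_y^2 g\|_2\lesssim D^{1/2}$; collecting exponents gives exactly $E^{1/4+1/(2p)}D^{1/4-1/(2p)}$.

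For (iv) I compute $\|\sqrt{1-y^2}\,\partial_y^2 g\|_2^2=\int_{-1}^1(1-y^2)(\partial_y^2 g)^2\,dy$ by integrating by parts once, writing $\partial_y^2 g=\partial_y(\partial_y g)$. The boundary term $[(1-y^2)\partial_y^2 g\,\partial_y g]_{-1}^1$ vanishes because the weight $(1-y^2)$ degenerates at the endpoints while $\partial_y g\in H^1\hookrightarrow L^\infty$ stays bounded and $\partial_y^2 g$ grows at most logarithmically (see below); I would justify this by integrating on $(-1+\delta,1-\delta)$ and letting $\delta\to0$. What remains is
\[\int_{-1}^1(1-y^2)(\partial_y^2 g)^2\,dy=2\int_{-1}^1 y\,\partial_y^2 g\,\partial_y g\,dy-\int_{-1}^1(1-y^2)\,\partial_y^3 g\,\partial_y g\,dy.\]
The first term is bounded by $2\|\partial_y^2 g\|_2\|\partial_y g\|_2\lesssim D^{1/2}E^{1/2}$, and the second, via Cauchy--Schwarz after splitting $(1-y^2)=\sqrt{1-y^2}\cdot\sqrt{1-y^2}$, by $\|\sqrt{1-y^2}\,\partial_y^3 g\|_2\,\|\partial_y g\|_2\lesssim D^{1/2}E^{1/2}$. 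Hence the left-hand side is $\lesssim E^{1/2}D^{1/2}$, and taking square roots gives (iv).

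The crux, which I expect to be the main obstacle, is (v)--(vi): the only control on $\partial_y^3 g$ comes through the \emph{degenerate} weighted norm $\|\sqrt{1-y^2}\,\partial_y^3 g\|_2$ in \eqref{D}, so the usual embedding $H^1\hookrightarrow L^\infty$ fails near $y=\pm1$. The key observation is that $\partial_y^2 g$ can blow up at most logarithmically. On the central interval $(-\tfrac12,\tfrac12)$ the weight is bounded below, so $\partial_y^2 g\in H^1$ there with norm $\lesssim D^{1/2}$ and in particular $|\partial_y^2 g(y_0)|\lesssim D^{1/2}$ at some central point $y_0$. For $y$ near $1$ the fundamental theorem of calculus and a weighted Cauchy--Schwarz give
\[|\partial_y^2 g(y)-\partial_y^2 g(y_0)|\le\Big(\int_{y_0}^{y}\frac{d\tau}{1-\tau^2}\Big)^{1/2}\|\sqrt{1-y^2}\,\partial_y^3 g\|_2\lesssim\big(1+|\ln(1-y)|\big)^{1/2}D^{1/2},\]
and symmetrically near $y=-1$, whence $|\partial_y^2 g(y)|\lesssim D^{1/2}\big(1+|\ln(1-y^2)|^{1/2}\big)$ on all of $(-1,1)$. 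Then (vi) follows because $(1-y^2)^\epsilon|\ln(1-y^2)|^{1/2}\to0$ as $y\to\pm1$ for every $\epsilon>0$, so any positive power weight defeats the logarithm; and (v) follows by raising the pointwise bound to the $p$-th power and integrating, using $\int_{-1}^1|\ln(1-y^2)|^{p/2}\,dy<\infty$ for every $p<\infty$ — which is exactly why the endpoint $p=\infty$ must be excluded in (v).
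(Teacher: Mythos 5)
Your proof is correct and follows essentially the same route as the paper: Poincar\'e for (i), the inequality \eqref{pw1} for (ii), Gagliardo--Nirenberg \eqref{GNI22} for (iii), the identical integration by parts plus Cauchy--Schwarz for (iv), and the same logarithmic pointwise bound on $\partial_y^2 g$ (an $H^1\hookrightarrow L^\infty$ bound on a central interval, then the fundamental theorem of calculus with the weight $\tfrac{1}{\sqrt{1-y^2}}$ and Cauchy--Schwarz) for (v)--(vi). Your explicit justification of the vanishing boundary term in (iv) is a small refinement over the paper, which drops it silently, but the argument is otherwise the same.
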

\begin{proof}\quad\\
\vspace{-0.5cm}
\begin{itemize}
\item[(i)] Applying \eqref{PoIn} with $q=\infty$ and $p=2$, we obtain \eqref{gE}.
\item[(ii)] The inequality \eqref{ED1} follows from \eqref{pw1} with $p=q=2$.
\item[(iii)] The inequality \eqref{partialggg} follows from \eqref{GNI6}.
\item[(iv)] We write, using integration by parts,
\begin{eqnarray*}\int_{-1}^1(1-y^2)\left(\partial^2_y g\right)^2 dy&=&\int_{-1}^1(1-y^2)\partial^2_y g \partial^2_y g dy\\&=&-\int_{-1}^1(1-y^2)\partial^3_y g \partial_y g dy+\int_{-1}^12y\partial^2_y g \partial_y g dy.\end{eqnarray*}
Then \eqref{partialgsecond} follows by the Cauchy-Schwarz inequality.
\item[(v)] To prove\eqref{N} we write,
\begin{eqnarray*}\partial_y^2g(y) & = &\partial_y^2g(0)+\int_0^y\partial_y^3g(z)dz\\ & = &\partial_y^2g(0)+\int_0^y \sqrt{1-z^2}\partial_y^3g(z)\frac{1}{\sqrt{1-z^2}}dz.\end{eqnarray*}
Since { $\sqrt{1-y^2}\geq \frac{\sqrt{3}}{2}$ for $y\in (-{1/2}, {1/2})$}, we obtain
\begin{eqnarray*}
|\partial_y^2g(0)|&\lesssim &\|\partial_y^2g\|_{L^\infty(-1/2,-1/2)}\\ &\lesssim & \|\partial_y^2g\|_{H^1(-1/2,-1/2)} \\ &\lesssim & \|\sqrt{1-y^2}\partial_y^3g\|_2+\|\partial_y^2g\|_2\\ &\lesssim& D^{1/2}.
\end{eqnarray*}
By Cauchy Schwarz's inequality, we get
\begin{eqnarray}
\nonumber
|\partial_y^2g(y)|&\lesssim & |\partial_y^2g(0)|+\left(\left|\int_0^y \left(\sqrt{1-z^2}\partial_z^3g(z)\right)^2dz\right|\right)^{1/2}\left(\left|\int_0^y\frac{1}{1-z^2}dz\right|
\right)^{1/2} \\
\label{epsi}
&\lesssim &  D^{1/2}+D^{1/2}\left(\frac{1}{2}\left|\log\left(\frac{1+y}{1-y}\right)\right|\right)^{1/2}
\end{eqnarray}
{ The desired inequality \eqref{N} follows thanks to the fact that $\left(\left|\log\left(\frac{1+y}{1-y}\right)\right|\right)^{1/2}\in L^p(-1,1)$ for any $2\leq p<\infty.$}
\item[(vi)] {Inequality \eqref{eps} can be deduced easily from \eqref{epsi} making use of $x^\alpha\,\log(x)\in L^\infty(0,a)$ for any positive constants $\alpha$ and $a$.}
\end{itemize}
\end{proof}

\begin{lem}
\label{Infi}
Let $m\geq 1$ be an integer and $g=g(y)\in H^2(-1,1)\cap\, H^1_0(-1,1)$. Then we have
\begin{equation}
\label{SUP}
\Big\|\frac{g^m}{1-y^2}\Big\|_{\infty}\lesssim\,E^{{m/2}-{1/4}}\,D^{{1/4}}.
\end{equation}
\end{lem}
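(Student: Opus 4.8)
The plan is to peel off the power of $g$ using the uniform bound from part (i) of Lemma \ref{ED}, and thereby reduce the whole estimate to controlling the single quotient $g/(1-y^2)$, which I would handle by a Hardy-type argument exploiting the boundary conditions $g(\pm1)=0$.

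First I would write
$$
\Big\|\frac{g^m}{1-y^2}\Big\|_\infty \le \|g\|_\infty^{\,m-1}\,\Big\|\frac{g}{1-y^2}\Big\|_\infty \lesssim E^{(m-1)/2}\,\Big\|\frac{g}{1-y^2}\Big\|_\infty,
$$
where the last step uses \eqref{gE}. It therefore suffices to treat the case $m=1$, that is, to prove $\big\|g/(1-y^2)\big\|_\infty \lesssim E^{1/4}\,D^{1/4}$: multiplying by the factor $E^{(m-1)/2}$ then produces the exponent $\tfrac{m-1}{2}+\tfrac14=\tfrac{m}{2}-\tfrac14$ on $E$, which is exactly \eqref{SUP}.

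Next I would bound the quotient by $\|\partial_y g\|_\infty$. Since $g\in H^2(-1,1)\hookrightarrow C^1([-1,1])$, the derivative $g'$ is bounded, and I would split the interval at $y=0$. On $[0,1)$ the factor $1+y\ge 1$ is harmless, while $g(1)=0$ gives $g(y)=-\int_y^1 g'(s)\,ds$, so
$$
\Big|\frac{g(y)}{1-y^2}\Big|\le \frac{1}{1-y}\int_y^1|g'(s)|\,ds\le \|\partial_y g\|_\infty .
$$
The symmetric argument on $(-1,0]$, now using $g(-1)=0$ and $1-y\ge 1$, yields the matching bound there. Hence $\big\|g/(1-y^2)\big\|_\infty \lesssim \|\partial_y g\|_\infty$.

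Finally I would invoke part (iii) of Lemma \ref{ED} at the endpoint $p=\infty$, namely \eqref{partialggg}, which gives $\|\partial_y g\|_\infty \lesssim E^{1/4}\,D^{1/4}$. Chaining the three steps then delivers $\big\|g^m/(1-y^2)\big\|_\infty \lesssim E^{(m-1)/2}\cdot E^{1/4}D^{1/4}=E^{m/2-1/4}D^{1/4}$, as claimed. The only delicate point is the Hardy-type estimate near the endpoints, but the vanishing of $g$ at $\pm 1$ together with the uniform bound on $g'$ renders the singular factor $1/(1-y^2)$ completely tame, so no genuine obstacle remains.
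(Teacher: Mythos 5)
Your proposal is correct and follows essentially the same route as the paper: the paper also splits the interval at $y=0$, uses $g(\pm1)=0$ together with the mean value theorem to bound $\|g^m/(1-y^2)\|_\infty$ by $m\|g\|_\infty^{m-1}\|\partial_y g\|_\infty$, and then concludes via \eqref{gE} and \eqref{partialggg} with $p=\infty$. Your only (inessential) variation is peeling off the factor $\|g\|_\infty^{m-1}$ before, rather than while, applying the fundamental-theorem/mean-value argument to tame the singular factor $1/(1-y^2)$.
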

\begin{proof}
Since $H^2 \hookrightarrow C^1$ and $g(\pm 1)=0$, we get by the mean value theorem that
\begin{eqnarray*}
\Big\|\frac{g^m}{1-y^2}\Big\|_{\infty}&\leq& \Big\|\frac{g^m}{1-y^2}\Big\|_{L^\infty(-1,0)}+\Big\|\frac{g^m}{1-y^2}\Big\|_{L^\infty(0,1)}\\
&\leq& 2m\|g\|_{\infty}^{m-1}\,\|\partial_y\,g\|_{\infty}.
\end{eqnarray*}
Therefore we obtain thanks to \eqref{gE} and \eqref{partialggg} (with $p=\infty$) that
\begin{eqnarray*}
\Big\|\frac{g^m}{1-y^2}\Big\|_{\infty}&\lesssim&E^{\frac{m-1}{2}}\,E^{{1/ 4}}\,D^{{1/ 4}}\\
&\lesssim&E^{\frac{2m-1}{4}}\,D^{{1/4}}.
\end{eqnarray*}
This finishes the proof.
\end{proof}

To obtain the desired decay estimate, we will use the following.
\begin{lem}
\label{Decay}
Let $E, D : [0,\infty)\to[0,\infty)$ be absolutely continuous functions such that
\begin{equation}
\label{Ineq1}
\frac{dE}{dt}+D\leq a\left(E^{\alpha}+E^{\beta}\,D\right),
\end{equation}
and
\begin{equation}
\label{Ineq2}
b E\leq  D,
\end{equation}
where $a, b>0$, $\alpha>1$ and $\beta> 0$.
Then there exists $\varepsilon>0$ and $\nu>0$ such that
\begin{equation}
\label{conc}
E(0)<\varepsilon\,\Longrightarrow\, E(t)\leq\,E(0)\,{\rm e}^{-\nu\,t},\; t\geq 0.
\end{equation}
\end{lem}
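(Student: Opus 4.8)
The plan is to choose the smallness threshold $\varepsilon$ so small that, as long as $E(t)\le\varepsilon$, the two nonlinear terms on the right-hand side of \eqref{Ineq1} can be absorbed, turning \eqref{Ineq1} into a genuine linear differential inequality of the form $E'+\nu E\le 0$. The coercivity \eqref{Ineq2} is precisely what converts the surviving dissipation into a multiple of $E$ itself, producing the exponential rate.

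First I would absorb the dissipative nonlinearity. Since $\beta>0$, for $E\le\varepsilon$ we have $aE^{\beta}\le a\varepsilon^{\beta}$; choosing $\varepsilon\le(2a)^{-1/\beta}$ gives $aE^{\beta}D\le\tfrac{1}{2}D$, so that \eqref{Ineq1} becomes
$$
\frac{dE}{dt}+\frac{1}{2}\,D\le aE^{\alpha}.
$$
Next I would invoke \eqref{Ineq2} in the form $\tfrac{1}{2}D\ge\tfrac{b}{2}E$ together with $\alpha>1$: writing $aE^{\alpha}=aE^{\alpha-1}E\le a\varepsilon^{\alpha-1}E$ and choosing $\varepsilon\le(b/4a)^{1/(\alpha-1)}$ yields $aE^{\alpha}\le\tfrac{b}{4}E$. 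Combining the two bounds gives, whenever $E(t)\le\varepsilon$,
$$
\frac{dE}{dt}+\frac{b}{4}\,E\le 0,
$$
so the natural choice is $\nu=b/4$ and $\varepsilon=\min\{(2a)^{-1/\beta},(b/4a)^{1/(\alpha-1)}\}$.

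The remaining and only delicate point is to guarantee that the hypothesis $E(t)\le\varepsilon$ persists for all $t$, since the displayed linear inequality has so far only been derived on the set where $E$ is already small. I would handle this by a continuity (bootstrap) argument: assuming $E(0)<\varepsilon$, set $T^{*}=\sup\{T\ge0:\ E(t)\le\varepsilon\ \text{for all}\ t\in[0,T]\}$, which is positive by continuity of $E$. On $[0,T^{*})$ the inequality $\frac{dE}{dt}+\frac{b}{4}E\le 0$ holds; since $E$ is absolutely continuous, the function $t\mapsto e^{\nu t}E(t)$ has nonpositive derivative almost everywhere and is therefore nonincreasing, which gives $E(t)\le E(0)e^{-\nu t}\le E(0)<\varepsilon$ on $[0,T^{*})$. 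If $T^{*}$ were finite, continuity would force $E(T^{*})=\varepsilon$, contradicting $E(T^{*})\le E(0)e^{-\nu T^{*}}<\varepsilon$; hence $T^{*}=\infty$ and \eqref{conc} holds on all of $[0,\infty)$.

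The only genuine obstacle is this last bootstrap step, namely propagating the smallness of $E$ forward in time so that the linear inequality is valid globally; the two absorption estimates are purely algebraic and present no difficulty once $\varepsilon$ is fixed as above, and the use of absolute continuity in place of differentiability is routine.
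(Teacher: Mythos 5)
Your proof is correct and follows essentially the same strategy as the paper's: absorb the $aE^{\beta}D$ term into the dissipation, bound $aE^{\alpha}\le a\varepsilon^{\alpha-1}E$, convert $D$ into $E$ via \eqref{Ineq2} to obtain a linear differential inequality, and propagate the smallness through a continuity argument with the same $T^{*}$. The only differences are cosmetic: you fix explicit absorption constants (giving $\nu=b/4$) where the paper keeps the general rate $\nu=b(1-a\varepsilon^{\beta})-a\varepsilon^{\alpha-1}$, and you spell out the contradiction at a finite $T^{*}$, which the paper leaves implicit.
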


\begin{proof}
 Choose $\varepsilon>0$ small enough such that $1-a\varepsilon^\beta>0$ and
 \begin{equation}
 \label{nu}
    \nu:=b(1-a\varepsilon^\beta)-a\varepsilon^{\alpha-1}>0.
 \end{equation} Suppose that $E(0)<\varepsilon$ and define
    \begin{equation}
    \label{T}
    T^*=\sup\Big\{\, s\geq 0;\;\;\; E(t)\leq\varepsilon\quad\mbox{for all}\quad t\in [0,s]\,\Big\}.
    \end{equation}
By continuity of $E$ we deduce that $T^*>0$. For $0\leq t<T^*$, we have
\begin{eqnarray*}
\frac{dE}{dt}+(1-a\varepsilon^\beta)D&\leq& a E^\alpha,\\
&\leq& a \varepsilon^{\alpha-1}E.
\end{eqnarray*}
Using \eqref{Ineq2} we deduce that
$$
\frac{dE}{dt}+\nu\,E\leq 0,
$$
where $\nu$ is given by \eqref{nu}.
This finally leads to
$$
E(t)\leq E(0) {\rm e}^{-\nu\,t},\quad\mbox{for all}\quad 0\leq t<T^*.
$$
In particular $T^*=\infty$  and, for all $t\in [0,\infty)$, we have
$$
E(t)\leq\,E(0)\,{\rm e}^{-\nu\,t}.
$$
\end{proof}
From the above Lemma we deduce the following.
\begin{cor}
\label{DecayC}
Let $E, D : [0,\infty)\to[0,\infty)$ be absolutely continuous functions such that
\begin{equation}
\label{Ineq1C}
\frac{dE}{dt}+D\leq a\left(E^{\alpha_1}+E^{\alpha_2}+\left(E^{\beta_1}+E^{\beta_2}\right)\,D\right),
\end{equation}
and
\begin{equation}
\label{Ineq2C}
b E\leq  D,
\end{equation}
where $a, b>0$, $\alpha_1, \alpha_2>1$ and $\beta_1, \beta_2> 0$.
Then, for $c_0>0$ satisfying $1-a(c_0^{\beta_1}+c_0^{\beta_2})>0$ and
\begin{equation}
\label{gam}
\gamma:=b\left(1-a(c_0^{\beta_1}+c_0^{\beta_2})\right)-a(c_0^{\alpha_1-1}+c_0^{\alpha_2-1})>0,
\end{equation} we have
\begin{equation}
\label{concc}
E(0)<c_0\,\Longrightarrow\, E(t)\leq\,E(0)\,{\rm e}^{-\gamma\,t},\; t\geq 0.
\end{equation}
\end{cor}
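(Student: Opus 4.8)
The plan is to reproduce the continuity (bootstrap) argument from the proof of Lemma \ref{Decay}, now carrying two exponents in each of the two groups of nonlinear terms. First I fix $c_0>0$ small enough that $1-a(c_0^{\beta_1}+c_0^{\beta_2})>0$ and that the quantity $\gamma$ in \eqref{gam} is strictly positive. Such a $c_0$ exists because, as $c_0\to 0^+$, one has $c_0^{\beta_1}+c_0^{\beta_2}\to 0$ (using $\beta_1,\beta_2>0$) and $c_0^{\alpha_1-1}+c_0^{\alpha_2-1}\to 0$ (using $\alpha_1,\alpha_2>1$, so the exponents $\alpha_i-1$ are positive); hence the right-hand side of \eqref{gam} tends to $b>0$.

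Assuming $E(0)<c_0$, I define $T^*=\sup\{\,s\geq 0:\ E(t)\leq c_0\ \text{for all}\ t\in[0,s]\,\}$, which is positive by continuity of $E$. The key step is the estimate on $[0,T^*)$: there $E(t)\leq c_0$, so I control the dissipation coefficient by $E^{\beta_1}+E^{\beta_2}\leq c_0^{\beta_1}+c_0^{\beta_2}$, and I linearize the source terms via $E^{\alpha_i}=E^{\alpha_i-1}E\leq c_0^{\alpha_i-1}E$ for $i=1,2$. Substituting these into \eqref{Ineq1C} yields
\begin{equation*}
\frac{dE}{dt}+\bigl(1-a(c_0^{\beta_1}+c_0^{\beta_2})\bigr)D\leq a\bigl(c_0^{\alpha_1-1}+c_0^{\alpha_2-1}\bigr)E.
\end{equation*}
Since the coefficient of $D$ is nonnegative by the choice of $c_0$, I may apply \eqref{Ineq2C} in the form $bE\leq D$ to bound that dissipation term from below, obtaining $\frac{dE}{dt}+\gamma E\leq 0$ with $\gamma$ exactly as in \eqref{gam}. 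Integrating this differential inequality gives $E(t)\leq E(0)\,{\rm e}^{-\gamma t}$ on $[0,T^*)$.

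To close the argument I observe that, since $\gamma>0$ and $E(0)<c_0$, the bound just derived forces $E(t)<c_0$ for every $t\in[0,T^*)$, so $E$ cannot attain the threshold $c_0$; hence $T^*=\infty$ and the estimate \eqref{concc} holds for all $t\geq 0$. I do not anticipate any genuine obstacle, as the statement is a quantitative refinement of Lemma \ref{Decay}; the only points needing care are (i) verifying that an admissible $c_0$ exists, where the hypotheses $\alpha_1,\alpha_2>1$ and $\beta_1,\beta_2>0$ are precisely what makes the relevant powers of $c_0$ tend to zero, and (ii) closing the bootstrap, i.e. checking that the decay is consistent with the definition of $T^*$. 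One could alternatively deduce the corollary from Lemma \ref{Decay} by majorizing $E^{\alpha_1}+E^{\alpha_2}$ and $E^{\beta_1}+E^{\beta_2}$ by single powers on the set $\{E\leq c_0\}$, but the self-contained bootstrap above is cleaner and reproduces the sharp rate $\gamma$ of \eqref{gam}.
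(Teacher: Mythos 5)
Your proof is correct and follows essentially the same route as the paper: the paper deduces Corollary \ref{DecayC} directly from Lemma \ref{Decay}, whose proof is exactly your bootstrap argument (define $T^*$, absorb the $D$-terms using $E\leq c_0$, linearize $E^{\alpha_i}\leq c_0^{\alpha_i-1}E$, apply $bE\leq D$, integrate, and conclude $T^*=\infty$), only with single exponents $\alpha,\beta$ in place of your pairs. The one cosmetic difference is that the corollary takes $c_0$ as \emph{given} subject to \eqref{gam}, so your opening step of choosing $c_0$ small is not needed; your argument works verbatim for any admissible $c_0$.
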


\section{The energy estimate}

In this section we will see how to estimate all terms ${\mathbf I}_k$ to get the following energy inequality leading to the desired decay rate.
\begin{prop}
\label{EnEs}
We have
\begin{equation}
\label{Ener}
\frac{dE}{dt}+D\lesssim E^2+E^{10}+\left(E^{1/3}+E^{5/2}\right)\,D.
\end{equation}
\end{prop}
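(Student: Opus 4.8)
The plan is to bound each of the ten integrals $\mathbf{I}_1,\dots,\mathbf{I}_{10}$ appearing in the first energy estimate \eqref{Energy} from Proposition \ref{EE1}, expressing every bound as a product of powers of $E$ and $D$ using the interpolation machinery of Lemma \ref{ED} and Lemma \ref{Infi}. The guiding principle is that each $\mathbf{I}_k$ is a multilinear expression in $g$ and its derivatives carrying a \emph{surplus} of $g$-factors beyond what the quadratic dissipation $D$ can absorb; that surplus will convert into positive powers of $E$ (via $\|g\|_\infty\lesssim E^{1/2}$ and $\|\partial_y g\|_p\lesssim E^{1/4+1/2p}D^{1/4-1/2p}$), while the remaining ``top-order'' derivative factors are controlled by $D^{1/2}$ or by the weighted norm $\|\sqrt{1-y^2}\,\partial_y^3 g\|_2\lesssim D^{1/2}$.

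**First I would** handle the terms in which every derivative factor is of order $\le 2$, namely $\mathbf{I}_1,\mathbf{I}_2,\mathbf{I}_3,\mathbf{I}_4$. For these I expand $(1+g)^k$ and $[(1+g)^5-1]$, noting that $\|(1+g)^k\|_\infty\lesssim 1$ once $\|g\|_\infty\lesssim E^{1/2}$ is small, and that $[(1+g)^5-1]$ contributes an extra factor $g$ hence an extra $E^{1/2}$ via Lemma \ref{Infi}-type estimates. A typical estimate uses H\"older to split the integrand and then applies \eqref{partialggg} with suitable $p$ together with \eqref{N} (i.e.\ $\|\partial_y^2 g\|_p\lesssim D^{1/2}$); for instance $\mathbf{I}_1$ is bounded by $\|\partial_y g\|_4^2\,\|\partial_y^2 g\|_\infty^{\,0}\cdots$ type groupings, distributing one power of $D^{1/2}$ to the highest derivative and collecting the rest as a power of $E$. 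I expect each of these to land at the scaling $E^{a}D^{b}$ with $a+b$ matching the multilinear degree and $b\le 1$, yielding contributions of the form $E^2$ or $\left(E^{1/3}+E^{5/2}\right)D$.

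**The genuinely delicate terms** are those involving $\partial_y^3 g$ and $\partial_y^4 g$, i.e.\ $\mathbf{I}_5,\mathbf{I}_9,\mathbf{I}_{10}$, because $\partial_y^3 g$ is only controlled in the \emph{weighted} norm $\|\sqrt{1-y^2}\,\partial_y^3 g\|_2$ and $\partial_y^4 g$ is not directly controlled by $D$ at all. The strategy here must be to \emph{integrate by parts} to lower the order of the worst derivative before estimating: in $\mathbf{I}_{10}$ the factor $\partial_y^4 g\,\partial_y^2 g$ must be integrated by parts (moving a derivative off $\partial_y^4 g$) so as to produce $(\partial_y^3 g)^2$ paired against the weight $\tfrac{1-y^2}{2}$, which is exactly the form $\|\sqrt{1-y^2}\,\partial_y^3 g\|_2^2\lesssim D$ can absorb, with the extra $g$-factor from $[(1+g)^5-1]$ supplying the needed positive power of $E$. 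The weight $(1-y^2)$ is essential and must be tracked carefully: the factors $(1-y^2)$ and $\sqrt{1-y^2}$ present in $\mathbf{I}_6$ through $\mathbf{I}_{10}$ are precisely what allow the weighted dissipation to close, and one uses \eqref{eps} (the weighted $L^\infty$ bound $\|(1-y^2)^\epsilon\,\partial_y^2 g\|_\infty\lesssim D^{1/2}$) to absorb the endpoint logarithmic losses. \textbf{This integration-by-parts bookkeeping against the degenerate weight is the main obstacle}, since one must ensure the boundary terms vanish (they do, using $g(t,\pm1)=0$ and the vanishing of the weight at $y=\pm1$) and that no uncontrolled $\partial_y^4 g$ survives.

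**Finally I would** collect all ten bounds. Each is of the form (constant)$\cdot E^{a_k}D^{b_k}$ with either $b_k=0$ and $a_k\ge 2$ (these feed the $E^2+E^{10}$ terms, the highest power $E^{10}$ coming from the term where the full fifth-power nonlinearity contributes five factors of $g$ with no derivative surplus available to trade for $D$) or $b_k=1$ with $a_k\in\{\tfrac13,\tfrac52\}$ (these feed the $\left(E^{1/3}+E^{5/2}\right)D$ term). Summing and using $A\approx A_1+\cdots+A_N$ notation to absorb constants then yields \eqref{Ener}. The extreme exponents $\tfrac13$ and $10$ should be read as the smallest positive $E$-power produced by the worst weighted interpolation and the largest produced by the fully nonlinear $(1+g)^5$ expansion, respectively; verifying that every $\mathbf{I}_k$ fits into one of these two buckets, after the integration-by-parts reductions above, completes the proof.
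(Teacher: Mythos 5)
Your overall outline---bounding each $\mathbf{I}_k$ via Lemma \ref{ED}, with integrations by parts to remove $\partial_y^3 g$ and $\partial_y^4 g$ from $\mathbf{I}_5$, $\mathbf{I}_9$, $\mathbf{I}_{10}$---is indeed the paper's strategy, but your final assembly step has a genuine gap. The interpolation estimates do \emph{not} place each $\mathbf{I}_k$ into one of your two buckets (pure $E^{a_k}$ with $a_k\ge 2$, or $E^{1/3}D$ and $E^{5/2}D$). What they actually give, and what the paper obtains, is
\begin{equation*}
\frac{dE}{dt}+D\lesssim \left(E^{1/2}+E^{5/2}\right)D+\left(E^{3/4}+E^{11/4}\right)D^{3/4}:
\end{equation*}
no $\mathbf{I}_k$ produces a pure power of $E$, and the terms $\mathbf{I}_1$ and $\mathbf{I}_6$ come out with the fractional power $D^{3/4}$, which lies in neither bucket. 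The missing idea is the concluding application of Young's inequality,
\begin{equation*}
E^{3/4}D^{3/4}=E^{1/2}\left(E^{1/4}D^{3/4}\right)\le E^{2}+E^{1/3}D,
\qquad
E^{11/4}D^{3/4}=E^{5/2}\left(E^{1/4}D^{3/4}\right)\le E^{10}+E^{1/3}D,
\end{equation*}
which is the \emph{only} source of the exponents $2$, $10$ and $1/3$ in \eqref{Ener}. Consequently your interpretation of these exponents (that $E^{10}$ comes from five underivated factors of $g$, and $E^{1/3}$ from a worst-case weighted interpolation) is incorrect, and without the Young step your argument cannot arrive at the stated right-hand side.

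A second gap is $\mathbf{I}_8=-2\int_{-1}^1(1-y^2)(1+g)^4(\partial_y^2 g)^3\,dy$, which you never address: it contains no third or fourth derivative, yet your recipe of ``distributing one power of $D^{1/2}$ to the highest derivative and collecting the rest as a power of $E$'' fails for it, precisely because it is \emph{cubic} in $\partial_y^2 g$. Any direct estimate of this kind, for instance $|\mathbf{I}_8|\lesssim \|(1-y^2)^{1/2}\partial_y^2 g\|_\infty\,\|(1-y^2)^{1/4}\partial_y^2 g\|_2^2\lesssim D^{1/2}\cdot E^{1/4}D^{3/4}=E^{1/4}D^{5/4}$ (using \eqref{eps}, \eqref{partialgsecond} and Cauchy--Schwarz), yields a power of $D$ strictly greater than one, which can be absorbed neither into \eqref{Ener} nor into the decay mechanism of Lemma \ref{Decay}. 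The paper circumvents this by an integration by parts based on $y=-\partial_y\bigl(\tfrac{1-y^2}{2}\bigr)$, which produces the algebraic identity \eqref{I4789}, namely $\mathbf{I}_4=-\tfrac{120}{11}\mathbf{I}_7-\tfrac{15}{2}\mathbf{I}_8-\tfrac{60}{7}\mathbf{I}_9$, and then solves for $\mathbf{I}_8$ in terms of quantities already estimated; some device of this sort is indispensable. (A more minor point: your use of $\|(1+g)^k\|_\infty\lesssim 1$ presupposes smallness of $E$, which is not assumed in the proposition; the paper keeps the factors $1+\|g\|_\infty^k\lesssim 1+E^{k/2}$, which is where the $E^{5/2}$ in \eqref{Ener} comes from.)
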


\begin{proof}
We have thanks to \eqref{Energy} that
$$
\frac{dE}{dt}+D\leq \sum_{k=1}^{10}\,{\mathbf I}_k,
$$
where the energy and the dissipation are given respectively by \eqref{E} and \eqref{D}.

In what follows we will estimate each term ${\mathbf I}_k$ for $k=1,2,\cdots,10$.\\
\noindent{\bf Estimation of ${\mathbf I}_1$}: By Cauchy-Schwarz inequality, and using \eqref{gE}, \eqref{partialggg}, we get
\begin{eqnarray*}
\left|{\mathbf I}_1\right|&\lesssim&\left(1+\|g\|_\infty^4\right)\,\|\partial_y^2\,g\|_2\,\|\partial_y\,g\|_4^2,\\
&\lesssim&\left(1+E^2\right)\,D^{1/2}\, D^{1/4}\,E^{3/4}.
\end{eqnarray*}
This leads to
\begin{equation}
\label{I1}
\left|{\mathbf I}_1\right|\,\lesssim\,\left(E^{3/4}+E^{11/4}\right)\,D^{3/4}.
\end{equation}

\noindent{\bf Estimation of ${\mathbf I}_2$}: Using  \eqref{gE}, we obtain that
\begin{eqnarray}
\nonumber
|{\mathbf I}_2| & =  & |\int_{-1}^1\,\left[(1+g)^5-1\right]\,\left(\partial_y^2\,g\right)^2\,dy|\\
\nonumber
& \lesssim  & \left(\|g\|_\infty+ \|g\|_\infty^5\right)\|\partial_y^2g\|_2^2\\
\label{I2}
&\lesssim  & \left(E^{1/2}+E^{5/2}\right)D.
\end{eqnarray}

\noindent{\bf Estimation of ${\mathbf I}_3$}: We have
$$
|{\mathbf I}_3| \lesssim  \left(1+ \|g\|_\infty^3\right)\|\partial_y^2g\|_2\|\partial_yg\|_6^3.$$
Using \eqref{gE} and \eqref{partialggg} (with $p=6$), we obtain that
\begin{eqnarray}
\nonumber
|{\mathbf I}_3| & \lesssim & \left(1+E^{3/2}\right)D^{1/2}\,E\,D^{1/2}
\\
\label{I3}
& \lesssim & \left(E+E^{5/2}\right)D.
\end{eqnarray}
\noindent{\bf Estimation of ${\mathbf I}_{4}$}:  We have
\begin{eqnarray*}
|{\mathbf I}_{4}|&\lesssim& \left(1+\|g\|_\infty^4\right)\bigg(\int_{-1}^1\,|\partial_y g|(\partial_y^2g)^2dy\bigg) \\&\lesssim&
\left(1+\|g\|_\infty^4\right)\|\partial_y g\|_2\|(\partial_y^2 g)^2\|_2 \\&\lesssim& \left(1+E^2\right) E^{1/2} \|\partial_y^2g\|_4^2.
\end{eqnarray*}
Using \eqref{N} with $p=4,$ we obtain
\begin{equation}
\label{I4}
|{\mathbf I}_{4}|\lesssim \left(E^{1/2}+E^{5/2}\right)\,D.
\end{equation}
\noindent\noindent{\bf Estimation of ${\mathbf I}_5$}: By using an integration by parts, (Note that $(1+g)^5-1=0$ for $y=\pm 1)$ we can write
\begin{eqnarray*}
{\mathbf I}_5 & \thickapprox  &\int_{-1}^1\,\left((1+g)^5-1\right)\left(\partial_y^2g\right)^2\,dy+\int_{-1}^1\,y(1+g)^4\partial_y g\left(\partial_y^2g\right)^2\,dy\\ & \thickapprox  &{\mathbf I}_2+{\mathbf I}_4.
\end{eqnarray*}
Therefore
\begin{equation}
\label{I5}
|{\mathbf I}_5|\lesssim \left(E^{1/2}+E^{5/2}\right)D.
\end{equation}
\noindent{\bf Estimation of ${\mathbf I}_6$}: By using an integration by parts, we can write
\begin{eqnarray*}
{\mathbf I}_6&=&-\frac{1}{10}\int_{-1}^1\,(1-y^2) (1+g)^2\,\partial_y\left[\left(\partial_y\,g\right)^5\right]\,dy\\
&=&-\frac{1}{5}\int_{-1}^1\,y (1+g)^2\,\left(\partial_y\,g\right)^5\,dy\\
&&+\frac{1}{5}\int_{-1}^1\,(1-y^2) (1+g)\,\left(\partial_y\,g\right)^6\,dy.
\end{eqnarray*}
Then, by \eqref{gE} and \eqref{partialggg}
\begin{eqnarray}
\nonumber
|{\mathbf I}_6|& \lesssim  &\left(1+\|g\|_\infty^2\right)\|\partial_y g\|_5^5+\left(1+\|g\|_\infty\right)\|\partial_y g\|_6^6
\\
\label{I6}
& \lesssim  & \left(1+E\right)E^{7/4}D^{3/4}+ \left(1+E^{1/2}\right)E^{2}D\\
\nonumber
&\lesssim&\left(E^{7/4}+E^{11/4}\right)D^{3/4}+\left(E^2+E^{5/2}\right) D.
\end{eqnarray}

\noindent{\bf Estimation of ${\mathbf I}_7$}: { Making use of \eqref{gE} and \eqref{eps}, we can write
\begin{eqnarray}
\nonumber
|{\mathbf I}_7|&\lesssim& \left(1+\|g\|_\infty^3\right)\|(1-y^2)^{1/2}\partial_y^2\,g\|_\infty^2\,\|\partial_y\,g\|_2^2\\
\label{I7}
&\lesssim&\left(1+E^{3/2}\right)\,D\,E=\left(E+E^{5/2}\right)\,D.
\end{eqnarray}
}

\noindent{\bf Estimation of ${\mathbf I}_9$}:  By H\"older's inequality, \eqref{gE}, \eqref{partialggg} and  \eqref{partialgsecond}, we get
\begin{eqnarray}
\nonumber
|{\mathbf I}_9|&\lesssim &(1+\|g\|_\infty^4)\|\partial_y\,g\|_\infty\,\|{\sqrt{1-y^2}}\partial_y^3\,g\|_2\,\|{\sqrt{1-y^2}}\,\partial_y^2\,g\|_2\\
\label{I9}
&\lesssim &(1+E^2)\,D^{1/4}E^{1/4}\,D^{1/2} D^{1/4}E^{1/4}\\
\nonumber
&\lesssim&\left(E^{1/2}+E^{5/2}\right)\,D.
\end{eqnarray}

\noindent{\bf Estimation of ${\mathbf I}_8$}: By integration by parts using the fact that $y=-\partial_y(\frac{1-y^2}{2})$, we have
\begin{equation}
\label{I4789}
{\mathbf I}_4=-\frac{120}{11}\,{\mathbf I}_7-\frac{15}{2}\,{\mathbf I}_8-\frac{60}{7}\,{\mathbf I}_9.
\end{equation}
Hence
\begin{eqnarray}
\nonumber
|{\mathbf I}_8|&\lesssim & |{\mathbf I}_9|+ |{\mathbf I}_7|+|{\mathbf I}_4|\\
\label{I8}&\lesssim & \left(E^{1/2}+E^{5/2}\right)\,D+\left(E+E^{5/2}\right)\,D\\
\nonumber
&\lesssim& \left(E^{1/2}+E+E^{5/2}\right)\,D.
\end{eqnarray}
\noindent{\bf Estimation of ${\mathbf I}_{10}$}: Using integration by parts, we get
\begin{eqnarray*}
{\mathbf I}_{10}&=&-\frac{1}{2}\int_{-1}^1\,(1-y^2)\left[(1+g)^5-1\right]\,\partial_y^4\,g\,\partial_y^2\,g\,dy\\
&\thickapprox &\int_{-1}^1\,y\left[(1+g)^5-1\right]\,\partial_y^3\,g\,\partial_y^2\,g\,dy\\&&+\int_{-1}^1\,(1-y^2)(1+g)^4\,\partial_y^3\,g\,\partial_y^2\,g\partial_y\,g\,dy\\
&&+
\int_{-1}^1\,(1-y^2)\left[(1+g)^5-1\right]\,(\partial_y^3\,g)^2\,dy\\&\thickapprox & {\mathbf I}_5+{\mathbf I}_9 +\int_{-1}^1\,(1-y^2)\left[(1+g)^5-1\right]\,(\partial_y^3\,g)^2\,dy.
\end{eqnarray*}
Hence we obtain thanks to \eqref{gE}, \eqref{I5} and \eqref{I9}
\begin{eqnarray}
\nonumber
|{\mathbf I}_{10}|&\lesssim& |{\mathbf I}_{5}|+|{\mathbf I}_{9}|+\left(\|g\|_\infty+\|g\|_\infty^5\right)D\\
\label{I10}
&\lesssim& |{\mathbf I}_{5}|+|{\mathbf I}_{9}|+\left(E^{1/2}+E^{5/2}\right)D\\
\nonumber
&\lesssim&\left(E^{1/2}+E^{5/2}\right)D.
\end{eqnarray}
Putting all the estimates \eqref{I1}--\eqref{I10} together, we get
$$\frac{dE}{dt}+D\lesssim\left(E^{1/2}+E+E^2+E^{5/2}\right)\,D+
\left(E^{3/4}+E^{7/4}+E^{11/4}\right)\,D^{3/4}.
$$
Using the fact that
\begin{equation}
\label{Ineq}
x^{\alpha_1}+x^{\alpha_2}+\cdots+x^{\alpha_n}\lesssim x^{\alpha_1}+x^{\alpha_n},
\end{equation}
for $\alpha_n>\alpha_{n-1}>\cdots>\alpha_1>0$ and $x\geq 0,$
we conclude that
$$
\frac{dE}{dt}+D\lesssim\left(E^{1/2}+E^{5/2}\right)\,D+
\left(E^{3/4}+E^{11/4}\right)\,D^{3/4}.
$$
By Young's inequality, we have
\begin{eqnarray*}
E^{3/4}\,D^{3/4}=E^{1/2}(E^{1/4}\,D^{3/4})&\leq&\,E^2+E^{1/3}D,\\
E^{11/4}\,D^{3/4}=E^{5/2}(E^{1/4}\,D^{3/4})&\leq&\,E^{10}+E^{1/3}D.
\end{eqnarray*}
Using again \eqref{Ineq} we conclude the proof.
\end{proof}
\section{Proof of the main results}
This section is devoted to the proof of the main results stated in the introduction. We begin by proving Theorem \ref{MainThm}. Using the equivalent expression of the energy $E$ given in \eqref{E}, it suffices to prove the following.
\begin{theo}
\label{MainThmg}
Let $g$ be a global smooth solution of \eqref{tfgg} with initial data $g_0$. There exists $\varepsilon>0$ such that, if
$$
E(0)< \varepsilon,
$$
then there exists $\gamma>0$ such that
$$
E(t)\leq E(0)\,{\rm e}^{-\gamma t},\quad t\geq 0.
$$
\end{theo}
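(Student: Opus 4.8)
The plan is to obtain the conclusion as an immediate consequence of the two quantitative estimates already established, fed into the abstract decay mechanism of Corollary~\ref{DecayC}. Concretely, the energy inequality of Proposition~\ref{EnEs}, namely \eqref{Ener},
\[
\frac{dE}{dt}+D\lesssim E^2+E^{10}+\left(E^{1/3}+E^{5/2}\right)D,
\]
is exactly of the form \eqref{Ineq1C} with the admissible exponents $\alpha_1=2$, $\alpha_2=10$ (both $>1$) and $\beta_1=1/3$, $\beta_2=5/2$ (both $>0$), the implicit constant being absorbed into a single $a>0$. On the other hand, the coercivity estimate \eqref{ED1} of Lemma~\ref{ED}, $E\lesssim D$, supplies a constant $b>0$ with $bE\leq D$, which is precisely \eqref{Ineq2C}.

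First I would record that, since $g$ is a global smooth solution of \eqref{tfgg}, the maps $t\mapsto E(t)=\tfrac12\|\partial_y g(t)\|_2^2$ and $t\mapsto D(t)$ are nonnegative and absolutely continuous on $[0,\infty)$, so the regularity hypotheses of Corollary~\ref{DecayC} are met and \eqref{Ener} holds in the required sense. With the identifications above, Corollary~\ref{DecayC} then applies verbatim.

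It remains only to exhibit an admissible threshold. I would choose $\varepsilon>0$ small enough that $1-a(\varepsilon^{1/3}+\varepsilon^{5/2})>0$ and
\[
\gamma:=b\bigl(1-a(\varepsilon^{1/3}+\varepsilon^{5/2})\bigr)-a(\varepsilon+\varepsilon^{9})>0.
\]
Such an $\varepsilon$ exists because $\beta_1,\beta_2>0$ and $\alpha_1-1,\alpha_2-1>0$ force both $a(\varepsilon^{1/3}+\varepsilon^{5/2})\to0$ and $a(\varepsilon^{\alpha_1-1}+\varepsilon^{\alpha_2-1})=a(\varepsilon+\varepsilon^{9})\to0$ as $\varepsilon\to0^+$, so that the right-hand side tends to $b>0$. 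Taking $c_0=\varepsilon$ in \eqref{gam}, the implication \eqref{concc} yields $E(t)\leq E(0)\,{\rm e}^{-\gamma t}$ for all $t\geq0$ whenever $E(0)<\varepsilon$, which is the claim.

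The key point is that no genuinely new analysis is needed at this stage: all the difficulty has already been front-loaded into Proposition~\ref{EnEs}, where the ten nonlinear terms ${\mathbf I}_1,\dots,{\mathbf I}_{10}$ were controlled by $E$ and $D$ through the interpolation inequalities of Lemma~\ref{ED}, and into the coercivity bound $E\lesssim D$. Thus the only genuine obstacle here is purely one of bookkeeping — confirming that the exponents produced by Proposition~\ref{EnEs} fall in the ranges required by Corollary~\ref{DecayC} and that the threshold $\varepsilon$ can be chosen to render $\gamma$ strictly positive — which the structure $\alpha_i>1$, $\beta_i>0$ guarantees. Finally, Theorem~\ref{MainThm} follows from Theorem~\ref{MainThmg} via the identification \eqref{E} of the two energies.
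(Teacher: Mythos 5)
Your proof is correct and takes essentially the same route as the paper, which likewise deduces the theorem by feeding Proposition~\ref{EnEs} into Corollary~\ref{DecayC} with $\alpha_1=2$, $\alpha_2=10$, $\beta_1=1/3$, $\beta_2=5/2$. Your additional bookkeeping (invoking \eqref{ED1} for the hypothesis $bE\leq D$ and exhibiting the explicit threshold $\varepsilon$ and rate $\gamma$) merely spells out details the paper leaves implicit.
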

\begin{proof}
The proof follows by using Proposition \ref{EnEs} and Corollary \ref{DecayC} with $\alpha_1=2,\; \alpha_2=10,\; \beta_1={1/3},\; \beta_2={5/2}.$
\end{proof}
\begin{proof}[Proof of Corollary \ref{u0}]
By \eqref{MLV}-\eqref{G}, we have that
$$u(t,Z(t,y))-u^\infty(y)=\frac{1}{2}(1-y^2)g(t,y),\; t\geq 0,\; y\in [-1,1].$$ In particular, for $t=0$ we have
$$u(0,Z(0,y))-u^\infty(y)=\frac{1}{2}(1-y^2)g(0,y)=:\frac{1}{2}(1-y^2)g_0(y),$$ where $g_0$ in the initial data for $g$ and $Z(0,y):=Z_0(y)$ is the initial data for $Z.$ The proof follows by using \eqref{gE}.
\end{proof}

\begin{proof}[{Proof of Corollary \ref{lamb}}]
By \eqref{G}, we have $Z_y(t,y)=\frac{1}{1+g(t,y)}$. Integrating with respect to $y$ and using $Z(t,\pm 1)=\lambda_{\pm}(t)$, we find that
$$
\lambda_+(t)-\lambda_-(t)=\int_{-1}^1\,\frac{dy}{1+g(t,y)}.
$$
It follows from \eqref{Et} and \eqref{gE} that, for $t$ sufficiently large, we have
\begin{eqnarray*}
|\lambda_+(t)-\lambda_-(t)-2|&=&\left|-\int_{-1}^1\,\frac{g(t,y)}{1+g(t,y)}dy\right|\\
&\lesssim&\|g(t)\|_\infty\\
&\lesssim&{\rm e}^{-\frac{\gamma}{2}t},
\end{eqnarray*}
where $\gamma$ is as in \eqref{Et}.
Next, we integrate $(y-\frac{y^3}{3})Z_y$ to get
\begin{eqnarray}
\nonumber
\lambda_+(t)+\lambda_-(t)&=&
\frac{3}{2}\int_{-1}^1\frac{y-\frac{y^3}{3}}{1+g(t,y)}\,dy
+\frac{3}{2}\int_{-1}^1\,(1-y^2)Z(t,y)\,dy\\
\label{aZy}
&=&\frac{3}{2}\int_{-1}^1\frac{y-\frac{y^3}{3}}{1+g(t,y)}\,dy,
\end{eqnarray}
where we have used
$$
\frac{3}{2}\int_{-1}^1\,(1-y^2)Z(t,y)\,dy=\frac{3}{2}\int_{\lambda_-(t)}^{\lambda_+(t)}\,x u(t,x)\,dx=\mu(u(t))=\mu(u_0)=0.
$$
Taking advantage of the fact that $\displaystyle\int_{-1}^1\,\left(y-\frac{y^3}{3}\right)\,dy=0$, we infer
\begin{equation}
\label{sumlam}
\lambda_+(t)+\lambda_-(t)=-\frac{3}{2}\int_{-1}^1\frac{(y-\frac{y^3}{3})g(t,y)}{1+g(t,y)}\,dy.
\end{equation}
We conclude the proof of \eqref{lambda0} by using \eqref{sumlam}, \eqref{Et} and \eqref{gE}.
Combining \eqref{cvlam} and \eqref{lambda0} we obtain \eqref{lambdapm}. This finishes the proof of Corollary \ref{lamb}.
\end{proof}
\begin{proof}[Proof of Corollary \ref{Zt}]
By \eqref{G} and \eqref{defg} we have the equality
$$\partial_y Z(t,y)-\partial_y Z^\infty=\partial_y Z-1=-\frac{g}{1+g}.$$
The proof follows using \eqref{Et} and \eqref{gE}.
\end{proof}

\end{document}